\numberwithin{equation}{section}
\DeclareFontFamily{OMX}{yhex}{}
\DeclareFontShape{OMX}{yhex}{m}{n}{<->yhcmex10}{}
\DeclareSymbolFont{yhlargesymbols}{OMX}{yhex}{m}{n}
\DeclareMathAccent{\wideparen}{\mathord}{yhlargesymbols}{"F3}
\DeclareMathOperator{\area}{Area}
\DeclareMathOperator{\arccoth}{arccoth}
\DeclareMathOperator{\arctanh}{arctanh}
\newcommand{\Addresses}{{
  \bigskip
  \footnotesize
  \noindent Guangming Hu, \href{18810692738@163.com}{18810692738@163.com}
\newline\textit{ College of Science, Nanjing University of Posts and Telecommunications,
  Nanjing, 210003, P.R. China.}\par\nopagebreak
  \medskip
  \noindent Ziping Lei, \href{zplei@ruc.edu.cn}{zplei@ruc.edu.cn}
  \newline\textit{ School of Mathematics, Renmin University of China, Beijing, 100872, P.R. China.} \par\nopagebreak
  \medskip
  \noindent Yi Qi,
\href{yiqi@buaa.edu.cn}{yiqi@buaa.edu.cn}
\newline\textit{ School of Mathematical Sciences,
Beihang University, Beijing, 100191, P.R. China}\par\nopagebreak
    \medskip
  \noindent Puchun Zhou, \href{pczhou22@m.fudan.edu.cn}{pczhou22@m.fudan.edu.cn}
  \newline\textit{ School of Mathematical Sciences, Fudan University, Shanghai, 200433, P.R. China.} }}
\title{  Combinatorial  $p$-th Calabi Flows for Total Geodesic Curvatures in hyperbolic background geometry}
\author{ Guangming Hu, Ziping Lei, Yi Qi and Puchun Zhou}
\date{}
\newtheorem{theorem}{Theorem}[section]
\newtheorem{lemma}[theorem]{Lemma}
\newtheorem{proposition}[theorem]{Proposition}
\newtheorem{corollary}[theorem]{Corollary}
\theoremstyle{definition}
\newtheorem{definition}[theorem]{Definition}
\newtheorem{remark}[theorem]{Remark}
\newtheorem{example}[theorem]{Example}
\newcommand{\pp}[2]{\frac{\partial#1}{\partial#2}}
\newcommand{\ddt}[1]{\frac{\mathrm{d}#1}{\mathrm{d}t}}
\begin{document}
\maketitle

\begin{abstract}
In hyperbolic background geometry, we investigate  a generalized  circle packing (including circles, horocycles and hypercycles) with conical singularities on a surface with boundary, which has a  total geodesic curvature on each generalized circle of  this circle packing and a discrete Gaussian curvature  on the center of each dual circle. The purpose of this paper is to find this type of circle packings with prescribed total geodesic curvatures on generalized circles and discrete Gaussian curvatures on centers of dual circles. To achieve this goal, we firstly establish existence and rigidity on this type of circle packings by the variational principle. Secondly, for $p>1$, we introduce combinatorial $p$-th Calabi flows to find the circle packing with prescribed total geodesic curvatures on generalized circles and discrete Gaussian curvatures on centers of dual circles for the first time.

\medskip
\noindent\textbf{Mathematics Subject Classification (2020)}: 52C25, 52C26, 53A70.
\end{abstract}

\section{Introduction}
\subsection{Background}
Given a smooth manifold $M$ and a function $K$ on $M$, it is a natural problem  whether there exists a metric $g$ on $M$ in a certain conformal class such that $K$ can be realized as the scalar curvature of $g$. When $M$ is the sphere $\mathbb{S}^2$, the problem is the classical Nirenberg problem, e.g. see \cite{chang1987prescribing}. When it comes to the polyhedral surface, the natural problem is to find the metric that admits prescribed discrete Gaussian curvature, which is $2\pi$ minus the sum of cone angles at the conical point. Besides, the circle packing metric of polyhedral surface is one of the discrete analogues of the conformal structure on smooth surfaces. For other discrete conformal structure, we recommend readers to read \cite{BPS,Luo,izmestiev}.
For finding the metric with prescribed Gaussian curvatures, flow approaches are developed in both smooth and discrete cases. 

The Ricci flow was introduced by Hamilton \cite{Hamilton} and Perelman improved  Hamilton’s program of Ricci flow to solve the Poincaré conjecture and Thurston’s geometrization conjecture \cite{Perelman1, Perelman2, Perelman3}. And it can be used to prove the uniformization theorem, see \cite{Hamilton,Tian}.
 To find metrics with constant curvatures in a certain K\"{a}hler class, Calabi studied the variational problem of Calabi energy and introduced a geometric flow, which is now well known as the Calabi flow, see \cite{Calabi,Chenxx_1}.  
 
For the discrete geometry, the combinatorial Ricci flow was introduced by Chow and Luo \cite{chow-Luo} for finding Euclidean (spherical or hyperbolic) polyhedral surfaces with zero discrete Gaussian curvatures, as a counterpart of  Ricci flows on smooth manifolds. They showed that the  solution of a combinatorial Ricci flow exists for the long time and converges under some combinatorial conditions in Euclidean and hyperbolic background geometry. There are many applications on combinatorial Ricci flows, for example, see \cite{Feng, Gehua, ge2, GeJiang, David, Gu1, Gu2, Gu3, Luo}.

For the Calabi flow in the discrete case, Ge in his Ph.D. thesis \cite{Ge1} introduced the combinatorial Calabi flow for circle packing metrics. Moreover, 
 Ge \cite{Gehua}, Ge and Hua \cite{Gehua2}, and Ge and Xu \cite{Gexu2} proved that the combinatorial Calabi flow exists for all the time and converges for the Euclidean (hyperbolic) circle packing under some combinatorial conditions. After that, Lin and Zhang \cite{Lin} introduced the
combinatorial $p$-th Calabi flows which precisely equal the
combinatorial Calabi flows  when $p = 2$. 
 Recently, the combinatorial   Calabi flow has been widely concerned on the polyhedral surfaces in Euclidean and hyperbolic background geometry, for more details see \cite{Bate2, Feng2, Luo2, wu, xu, xu2}.

In hyperbolic background geometry, this paper investigates  a generalized  circle packing (including circles, horocycles and hypercycles) with conicial singularities on a surface with boundary, which has a  total geodesic curvature on each generalized circle of  this circle packing and a discrete Gaussian curvature  on the center of each dual disk. To find the type of hyperbolic  circle packings with prescribed total geodesic curvatures on generalized circles and discrete Gaussian curvatures on centers of dual disks, we establish an existence and rigidity on this type of circle packings by the variational principle, and then introduce the $p$-th Calabi flow of total geodesic curvatures for finding the circle packing with prescribed total geodesic curvatures.
\subsection{Main results}

This paper continues the study of \cite{HHSZ} with respect to a generalized hyperbolic circle packing on a surface with boundary.  In \cite{HHSZ}, the partial authors of this paper introduced a generalized hyperbolic circle (including circles, horocycles and  hypercycles) packing on a surface with boundary  with respect to a total geodesic curvature on each interior vertex and each boundary vertex, whose contact graph is the $1$-skeleton of a finite polygonal cellular decomposition.
The total geodesic curvature was first introduced in the work of Nie \cite{nie} as a powerful tool for studying the rigidity of the circle patterns in the spherical background geometry.  In   \cite{BHS}, the first author of this paper established an existence and rigidity  of the generalized hyperbolic circle packing metric with conical singularities on a  triangulated surface with respect to a total geodesic curvature on each vertex.


%

\subsubsection{Generalized circle packings on polygons}\label{sec121}

A \textbf{generalized circle} on the Poincar\'{e} disk model $\mathbb{H}^2$ is a "circle" with radius $r$ and constant geodesic curvature $k$, which is either a circle, a horocycle or a hypercycle. As the generalization of  center of a circle,  the "center" of a horocycle is defined as the unique limit point on the boundary and the "center" of a hypercycle is defined as the geodesic connecting its two limit points on the boundary, as shown in Figure \ref{cycles}.

 \begin{figure}[htbp]
\centering
\includegraphics[scale=0.4]{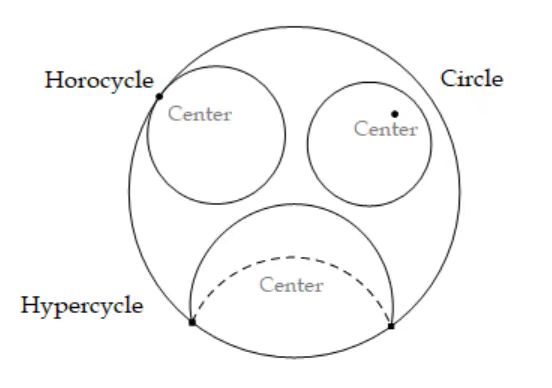}
\captionof{figure}{\small A circle, a horocycle and a hypercycle  on $\mathbb{H}^2$.}
  \label{cycles}
\end{figure} 

For an arc $C$ on a hypercycle $\Gamma$ with geodesic axis $\gamma$ and radius $r$, the inner angle $\theta$ of the arc $C$ is defined as the hyperbolic length of shortest distance projection of $C$ onto the geodesic axis $\gamma$, as shown in Figure \ref{hyperarc}.
 \begin{figure}[htbp]
\centering
\includegraphics[scale=0.4]{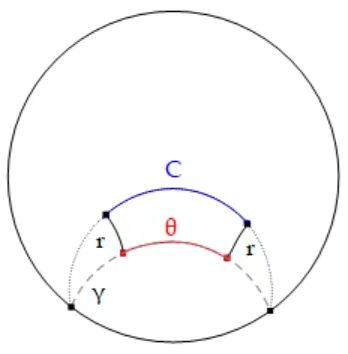}
\captionof{figure}{\small The inner angle $\theta$ of the arc $C$ in a hypercycle on $\mathbb{H}^2$.}
  \label{hyperarc}
\end{figure}

\begin{table}\centering
\centering
\footnotesize	
\begin{tabular}
{l|l|l|l}
 &  \small Radius &  \small Absolute value of geodesic curvature &  \small  Arc length \\ \hline
 \small Circle &  $0<r<\infty$&  $k=\coth r$&  $l=\theta\sinh r$  \\ 
 \small Horocycle &  $r=\infty$&  $k=1$&  -----  \\
 \small Hypercycle &  $0<r<\infty$& $k=\tanh r$ &  $l=\theta\cosh r$  \\ \hline
\end{tabular}

\caption{\small The relation of radii, absolute values of geodesic curvatures and arc lengths}


\label{relation}
\end{table}
The Table \ref{relation} shows the relations among the radius $r$ and absolute geodesic curvature $k$ of generalized circle, and the inner angle $\theta$ and the corresponding arc length $l$ of the generalized circle. For the case of horocycle, the arc length is shown in Lemma $2.5$ of \cite{BHS}. 
 
 \begin{figure}[htbp]
\centering
\includegraphics[scale=0.4]{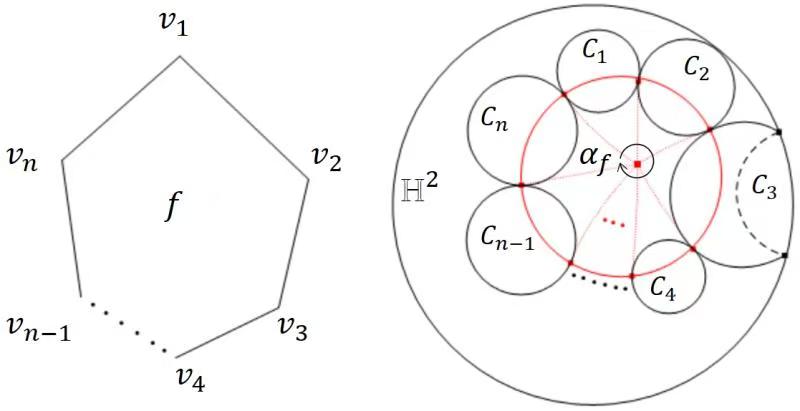}
\captionof{figure}{\small The polygon $f$ and generalized circle packing of $f$.}
  \label{cp}
\end{figure}

\begin{lemma}\label{packing}
        Given an abstract polygon $f$ with a vertex set $V_{f}$ consisting of $v_1,\cdots,v_n$. For $k=(k_1,\cdots,k_n)\in \mathbb{R}_{>0}^{n}$ on $V_{f}$ and $(2-n)\pi<Y_f<2\pi$, then there exists a geometric pattern $\mathcal{P}$ formed by $n$ generalized circles $C_1,\cdots,C_n$ on $\mathbb{H}^2$ which satisfies the following conditions:
\begin{enumerate}
    \item The generalized circles $C_{i}$ and $C_{j}$ is tangent to each other if they have a nonempty intersection.
    \item The generalized circle $C_i$ is tangent to another generalized circle $C_j$  if and only if $v_i$ and $v_j$ are joined by an edge of $f$.  
    \item There exists a hyperbolic circle $C_f$ with a conical angle $\alpha_{f}=2\pi-Y_f$ at its center which perpendiculars to each $C_i$ and contains all tangent points of  $C_1,\cdots,C_n.$ We call this circle the dual circle of $f$.
    \item The geodesic curvature of generalized circle $C_i$ is $k_i, i=1,\cdots, n$. 
\end{enumerate} 
The geometric pattern $\mathcal{P}$ is also called a \textbf{generalized circle packing} with geodesic curvatures $k=(k_1,\cdots,k_n)$ and conical angle $\alpha_f=2\pi-Y_f$ of the polygon $f$ as shown in Figure \ref{cp}. $Y_f$ is called the discrete Gaussian curvature at the center of dual circle $C_f$.

Moreover, let $k_f$ be the geodesic curvature of $C_f$, then $k_f$ is a $C^1$ continuous function with respect to $k=(k_1,\cdots,k_n).$
    \end{lemma}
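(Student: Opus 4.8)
The plan is to collapse the whole configuration onto a single scalar unknown, namely the radius $r_f$ of the dual circle $C_f$ (equivalently its geodesic curvature $k_f=\coth r_f$), and to recover everything else from the orthogonality conditions. Place the center $O$ of $C_f$ at the origin of the Poincar\'e disk. Since every $C_i$ meets $C_f$ orthogonally, $C_i$ intersects $C_f$ in exactly the two tangent points it shares with its neighbours $C_{i-1}$ and $C_{i+1}$, and at each such point the common tangent line of $C_i$ and $C_{i+1}$ is the geodesic ray emanating radially from $O$. Hence the $n$ tangent points cut $C_f$ into $n$ consecutive arcs, and the central angle $\theta_i$ subtended at $O$ by the arc facing $C_i$ is precisely the wedge occupied by $C_i$. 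The first step is therefore to compute $\theta_i$ as a function $\theta_i=\theta(k_i,r_f)$. For a genuine circle ($k_i=\coth r_i>1$) this is immediate from the hyperbolic right triangle $O\,p\,O_i$ formed by $O$, a tangent point $p$ and the center $O_i$ of $C_i$: orthogonality forces a right angle at $p$, whence $\tan(\theta_i/2)=\tanh r_i/\sinh r_f=1/(k_i\sinh r_f)$. I would then carry out the analogous (more delicate) computation for the horocyclic case $k_i=1$ and the hypercyclic case $k_i=\tanh r_i<1$, where the ``center'' is a point at infinity or a geodesic, to obtain a single formula valid for all $k_i>0$.

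Next, the closing-up condition is that the wedges exhaust the total angle at the cone point $O$, i.e. $\sum_{i=1}^{n}\theta(k_i,r_f)=\alpha_f=2\pi-Y_f$. Writing $\Theta(r_f)=\sum_{i=1}^{n}\theta(k_i,r_f)$, I would show from the explicit formula that, for each fixed $k_i>0$, the map $r_f\mapsto\theta(k_i,r_f)$ is smooth and strictly decreasing, with $\theta(k_i,r_f)\to\pi$ as $r_f\to0^{+}$ and $\theta(k_i,r_f)\to0$ as $r_f\to\infty$. Thus $\Theta$ is a smooth strictly decreasing bijection from $(0,\infty)$ onto $(0,n\pi)$. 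Since the hypothesis $(2-n)\pi<Y_f<2\pi$ is exactly equivalent to $\alpha_f\in(0,n\pi)$, there is a \emph{unique} $r_f\in(0,\infty)$ with $\Theta(r_f)=\alpha_f$, which gives both existence and uniqueness of the dual radius. From this $r_f$ I reconstruct the pattern $\mathcal{P}$: lay down $n$ radial rays from $O$ with consecutive angles $\theta(k_1,r_f),\dots,\theta(k_n,r_f)$ (developing into a hyperbolic cone of angle $\alpha_f$ at $O$ when $\alpha_f\neq 2\pi$), mark the tangent points on $C_f$, and let $C_i$ be the generalized circle of curvature $k_i$ orthogonal to $C_f$ at its two tangent points. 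Verifying conditions (1)--(4) is then routine: consecutive $C_i,C_{i+1}$ share the radial tangent at their common point and are therefore tangent there, while $C_f$ is orthogonal to every $C_i$ and carries all tangent points by construction.

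Finally, for the ``moreover'' statement I would invoke the implicit function theorem. Fixing $Y_f$, set $F(k,r_f)=\Theta(r_f)-(2\pi-Y_f)=\sum_i\theta(k_i,r_f)-(2\pi-Y_f)$, which is $C^{1}$ (indeed real-analytic) on $\mathbb{R}_{>0}^{n}\times(0,\infty)$. The partial derivative $\partial F/\partial r_f=\sum_i\partial_{r_f}\theta(k_i,r_f)$ is strictly negative by the monotonicity above, so $F=0$ can be solved locally for $r_f$ as a $C^{1}$ function $r_f=r_f(k)$; composing with the smooth map $r_f\mapsto k_f=\coth r_f$ shows that $k_f$ is $C^{1}$ in $k=(k_1,\dots,k_n)$, as claimed.

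I expect the main obstacle to be the first step: producing the angle function $\theta(k_i,r_f)$ in a single closed form that simultaneously covers circles, horocycles and hypercycles, and rigorously verifying its strict monotonicity in $r_f$ together with the limiting values $\pi$ and $0$. The circle case is a one-line trigonometric identity, but the hypercyclic case (whose ``center'' is a geodesic rather than a point) needs a separate orthogonal-intersection computation, most cleanly handled in the hyperboloid model, where a generalized circle is cut out by an affine hyperplane and both orthogonality and angles become linear-algebraic. Once the unified formula and its monotonicity are in hand, the intermediate value theorem and the implicit function theorem render the remaining arguments essentially automatic.
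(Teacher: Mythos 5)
Your proposal is correct and takes essentially the same route as the paper: both reduce the problem to the single unknown curvature/radius of the dual circle, use the per-vertex wedge angle $\theta_i$ at the cone point (your formula $\tan(\theta_i/2)=1/(k_i\sinh r_f)$ agrees with the relation $\cot(\theta_{i,f}/2)=k_i/\sqrt{k_f^2-1}$ used in the paper), establish strict monotonicity and the limiting values $0$ and $\pi$ to get a unique solution of $\sum_i\theta_i=\alpha_f$ by the intermediate value theorem, and then apply the implicit function theorem for the $C^1$ dependence of $k_f$ on $k$. The only difference is that the paper imports the angle formula, its derivative, and its limits from Lemma 2.1 of the reference \cite{HHSZ} (covering all three cases of circle, horocycle, and hypercycle at once), whereas you propose to derive them directly.
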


\begin{proof}

For $k_0>1, k_i>0$, we can construct three types of perpendicular circles $C_0, C_i$ on $\mathbb{H}^2$ as shown in Figure \ref{quadrilaterals}, where $k_i$ is the geodesic curvature of $C_i, i=0,1,\cdots,n$. For each perpendicular circles $C_0, C_i$, connecting the centers of the two circles and the intersecting points by geodesic arcs, we can obtain a ''quadrilateral''. By $\theta_i$ we denote the inner angle of the quadrilateral at the center of $C_0$ in perpendicular circles $C_0, C_i$, as shown in Figure \ref{quadrilaterals}. $\theta_i$ is a smooth function of $(k_0,k_i)$ and by Lemma 2.1 in \cite{HHSZ}, we have 
\begin{equation}\label{ae1}\frac{\partial\theta_i}{\partial k_{0}}=\frac{2k_{0}k_{i}}{\sqrt{k_{0}^2-1}(k_{i}^2+k_{0}^2-1)}>0.
\end{equation}
Moreover, fixing $k_{i}$, we have
\begin{equation}\label{ae2}
        \lim_{k_{0}\rightarrow 1^+}\theta_i(k_{0},k_{i})=0,~~
        \lim_{k_{0}\rightarrow +\infty}\theta_i(k_{0},k_{i})=\pi.
\end{equation}
Gluing all the quadrilaterals together at the center of $C_0$, we can get a hyperbolic cone metric with a cone angle $\alpha(k_0,k)=\sum_{i=1}^{n}\theta_i(k_0,k_i).$ By (\ref{ae1}) and (\ref{ae2}), the cone angle $\alpha$ varies from $0$ to $n\pi$ continuously as $k_0$ varies in $(1,+\infty)$ and is strictly increasing with respect to $k_0$. 

Hence for $k=(k_1,\cdots,k_n)\in \mathbb{R}_{>0}^{n}$ on $V_{f}$ and $(2-n)\pi<Y_f<2\pi$, there exists a unique $k_f\in (1,+\infty)$ such that $\alpha(k_f,k)=\alpha_f=2\pi-Y_f$. By $C_f$ we denote the hyperbolic circle with geodesic curvature $k_f$. By the above argument, we obtain a geometric pattern $\mathcal{P}$ formed by $n$ generalized circles $C_1,\cdots,C_n$ on $\mathbb{H}^2$ which satisfies the conditions in the above lemma.

By the implicit function theorem, $k_f=k_f(k_1,...,k_n)$ is a smooth function with respect to $k=(k_1,...,k_n)$.
\end{proof}

\begin{remark}
    In Figure \ref{quadrilaterals}, from left to right, the geodesic curvature $k_i$ is smaller, equal or larger than $1$ respectively. Moreover, when $0<k_i<1$, the ``quadrilateral'' is actually a pentagon. However, we regard the ``center'' of a hypercycle as a point. Hence the pentagon is also called a ``quadrilateral''. 
\end{remark}

\begin{figure}[htbp]
\centering
\includegraphics[scale=0.40]{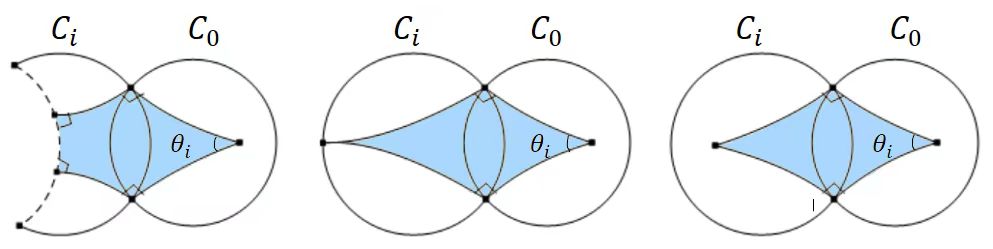}
\captionof{figure}{\small Three types of perpendicular circles $C_0, C_i$ and quadrilaterals.}
  \label{quadrilaterals}
\end{figure}

    By Lemma \ref{packing}, the generalized circle packing $\mathcal{P}$ on a polygon $f$ is determined by the geodesic curvature $k_{i}$ of the generalized circle corresponding to each vertex $v_{i}$ and the discrete Gaussian curvature $Y_f$ at the center of dual circle $C_f$.
    
Each vertex $v_{i}$ corresponds to one of following three objects in $\mathbb{H}^2$ according to its geodesic curvature $k_{i}$, i.e.

\begin{enumerate}
    \item The vertex $v_{i}$ corresponds to an interior point in $\mathbb{H}^2$ if the geodesic curvature $k_{i}>1$. 
    \item The vertex $v_{i}$ corresponds to an ideal point on the boundary of $\mathbb{H}^2$ if the geodesic curvature $k_{i}=1$. 
    \item The vertex $v_{i}$ corresponds to a geodesic arc in $\mathbb{H}^2$ if the geodesic curvature $0<k_{i}<1$. 
\end{enumerate}

\begin{figure}[htbp]
\centering
\includegraphics[scale=0.4]{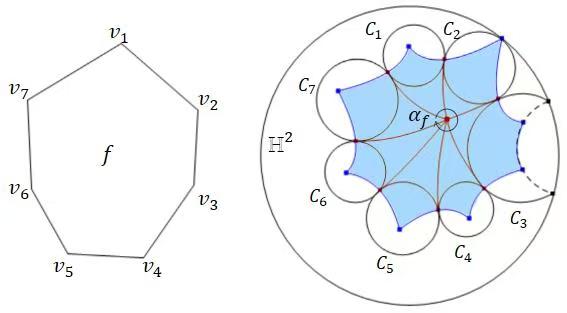}
\captionof{figure}{\small The polygon $f$ and hyperbolic geometric polygon $\hat{f}$ with a conical angle $\alpha_f$.}
  \label{hypolygon}
\end{figure} 

By $v_i$ we also denote the corresponding object in $\mathbb{H}^2$ of the vertex $v_i$. Given an abstract polygon $f$ with a vertex set $V_{f}$ consisting of $v_1,\cdots,v_n$, $k=(k_1,\cdots,k_n)\in \mathbb{R}_{>0}^{n}$ on $V_{f}$ and $(2-n)\pi<Y_f<2\pi$, by the proof of Lemma \ref{packing}, we obtain a hyperbolic geometric polygon $\hat{f}$ with a conical angle $\alpha_f=2\pi-Y_f$ corresponding to the generalized circle packing $\mathcal{P}$ of $f$ as shown in Figure \ref{hypolygon}. Let $v_{i}v_{j}$ be an edge of $f$ joining $v_i$, $ v_j\in V_f$. The hyperbolic length $l_{ij}$ of geodesic segment $\gamma_{ij}$  of $v_{i}v_{j}$ belongs to one of the following four cases.
\begin{enumerate}
    \item If  $v_i $ and $v_j$ are two interior points in $\mathbb{H}^2$, then the hyperbolic length is  $$l_{ij}=\arccoth k_i+\arccoth k_j.$$ 
    \item If $v_i $ is an interior point in $\mathbb{H}^2$ and $v_j$ is a geodesic arc in $\mathbb{H}^2$, then the hyperbolic length is
    $$
    l_{ij}=\arccoth k_i+\arctanh k_j.
    $$ 
 \item If at least one of $v_i$ and $v_j$ is an ideal point  on the boundary of $\mathbb{H}^2$, then the hyperbolic length is  $$l_{ij}=+\infty.$$
    \item If $v_i$ and $v_j$ are both geodesic arcs in $\mathbb{H}^2$, then the hyperbolic length is  $$l_{ij}=\arctanh{k_i}+\arctanh{k_j}.$$
    \end{enumerate}
In this paper, the above hyperbolic polygon $\hat{f}$ with a conical angle $2\pi-Y_f$ is called \textbf{generalized conical polygon} of  \textbf{generalized circle packing} with geodesic curvatures $k=(k_1,\cdots,k_n)$ on the polygon $f$.

\subsubsection{Generalized circle packing on surfaces}

Let $S$ be a surface with or without boundary. Let $\Sigma=(V,E,F)$ be a finite polygonal cellular decomposition of $S$, where $V,E$ and $F$ are sets of vertices, edges and faces, respectively. Denote $N(f)$ as the number of the vertices of $f\in F$.

\begin{definition}[Generalized  circle packing]
For a surface $S$ with a polygonal decomposition $\Sigma=(V,E,F)$, given a discrete Gaussian curvature $Y_f\in ((2-N(f))\pi,2\pi)$ for each $f\in F$ and geodesic curvatures $k\in \mathbb{R}^{|V|}_{>0}$ on $V$, we can construct a new surface $\hat{S}$ as follows.
\begin{enumerate}
    \item For each face $f\in F$ with $n$ vertices $v_1,\cdots,v_n$, by section \ref{sec121}, we can construct a 
generalized conical polygon $\hat{f}$ with a conical angle $\alpha_f=2\pi-Y_f$ of generalized circle packing with geodesic curvatures $(k_{v_1}, \cdots, k_{v_n})$ on the face $f$.
\item Gluing all the generalized conical polygons together along their edges, then we obtain a surface $\hat{S}$ with a hyperbolic conical metric, the metric is called a \textbf{generalized circle packing metric}.
\end{enumerate}
\end{definition}
Let $V_1,V_2$ and $V_3$ be the sets of vertices defined as follows.
\begin{enumerate}
    \item $v\in V_1$ iff $0<k_{v}<1$.
    \item $v\in V_2$ iff $k_{v}=1$.
    \item $v\in V_3$ iff $k_{v}>1$.
\end{enumerate}
Topologically, $\hat{S}$ can be obtained by removing a disk or a half disk (a point) for each $v\in V_1$($v\in V_2$) from $S$ respectively. 

\begin{example}
\begin{figure}[htbp]
\centering
\includegraphics[scale=0.4]{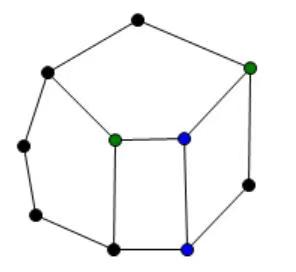}
\captionof{figure}{\small A surface $S$ with a polygonal decomposition.}
  \label{surface}
\end{figure}

    Let $S$ be a surface with polygonal cellular decomposition as shown in Figure \ref{surface}. We have $Y_f\in ((2-N(f))\pi,2\pi)$ for each $f\in F$ and $k\in \mathbb{R}_{>0}^{|V|}$ are discrete Gaussian curvatures on faces and geodesic curvatures on vertices respectively. Assuming the geodesic curvatures on green points are less than 1, the geodesic curvatures on blue points are equal to 1 and the geodesic curvatures on black points are greater than 1, the corresponding hyperbolic surface $\hat{S}$ with generalized circle packing metric  is shown in Figure \ref{surface_metric}.
\end{example}
 \begin{figure}[htbp]
\centering
\includegraphics[scale=0.3]{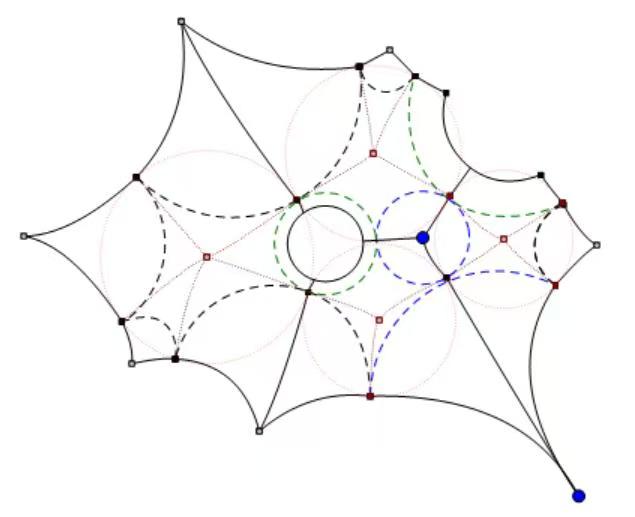}
\captionof{figure}{\small A surface $\hat{S}$ with generalized circle packing metric.}
  \label{surface_metric}
\end{figure}
 For a subset $W\subset V$, let $F_W$  denote the set
\[
F_W = \{f \in F~|~ \exists v\in W \text{ such that }v \text{ is a 
 vertex of } f \}.
\]
For $f\in F_W$, we define $$N(f,W)=\sharp\{v\in W~|~ v \text{ is a vertex of } f\}.$$

\begin{theorem}\label{main1}Set  $(S,\Sigma)$ a surface with or without boundary, which has a finite polygonal cellular decomposition. Denote by $V$, $E$ and $F$ the sets of vertices, edges and faces, respectively. Let $F_W$ be the set of faces having at least one vertex in $W$ for a subset $W\subset V$. Then there exists a surface $\hat{S}$ with generalized circle packing metric defined as above having total geodesic curvatures $L_1,\cdots,L_{\vert V\vert}$ on generalized circles and discrete Gaussian curvatures $Y_{1}, \cdots, Y_{|F|}$ on centers of dual circles if and only if $Y_{f}\in((2-N(f))\pi,2\pi)$, for each $f\in F$
and $(L_1,\cdots,L_{\vert V\vert})^T\in\mathcal{L}$, where \[\mathcal{L}=\left\{(L_1,\cdots,L_{\vert V\vert})^T\in\mathbb{R}^{\vert V\vert}_{>0}\left\vert\right.  \sum_{w\in W}L_v<\sum_{f\in F_W}\pi \min\bigg\{N(f,W),N(f)-2+\frac{Y_{f}}{\pi}\bigg\},~~\forall  W\subset
 V\right\}.\]
Moreover, the generalized circle packing metric is unique if it exists. 
\end{theorem}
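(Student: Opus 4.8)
The plan is to prove existence and rigidity together by a single global variational principle, in the spirit of Colin de Verdi\`ere and Chow--Luo and adapted to the generalized packings of \cite{BHS,HHSZ}.

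First I would choose coordinates. A vertex $v_i$ may be of circle, horocycle or hypercycle type, so its geodesic curvature $k_i$ runs over $(0,\infty)$; I would introduce $u_i=u_i(k_i)$ mapping $(0,\infty)$ diffeomorphically onto $\mathbb{R}$ and normalized so that every angle function extends $C^1$ across the horocyclic value $k_i=1$, which is legitimate by the $C^1$-dependence of $k_f$ on $k$ in Lemma \ref{packing}. Decomposing the total geodesic curvature into face contributions $L_i=\sum_{f\ni v_i}L_{i,f}$, where $L_{i,f}$ is $k_i$ times the length of the arc of $C_i$ inside the conical polygon $\hat f$, the global map $\Phi\colon u\mapsto(L_1,\dots,L_{|V|})$ is assembled from single-face maps. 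The hypothesis $Y_f\in((2-N(f))\pi,2\pi)$ is exactly the range in which Lemma \ref{packing} yields a packing for every $k$, so it serves only to make the domain of $\Phi$ all of $\mathbb{R}^{|V|}$; the genuine constraint is membership in $\mathcal{L}$.

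Second, the symmetry-and-convexity step. I would show the $1$-form $\omega=\sum_i L_i\,\mathrm{d}u_i$ is closed, i.e.\ $\partial L_i/\partial u_j=\partial L_j/\partial u_i$. Only faces containing both $v_i$ and $v_j$ contribute, so this reduces to a symmetry internal to each polygon, which follows by differentiating the explicit quadrilateral-angle relations (the analogue of \eqref{ae1}) into a Schl\"afli-type identity for $\hat f$. Integrating $\omega$ over the convex domain $\mathbb{R}^{|V|}$ produces a $C^2$ potential $\mathbb{E}$ with $\nabla\mathbb{E}=\Phi$. Its Hessian is the Jacobian $(\partial L_i/\partial u_j)$, which splits as a sum of per-face blocks; I would show each block is positive semidefinite and that the assembled matrix is positive definite once the $1$-skeleton is connected. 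Strict convexity of $\mathbb{E}$ yields rigidity immediately: $\nabla\mathbb{E}=\Phi$ is injective, so a metric realizing a prescribed $(L,Y)$ is unique, and the image $\Phi(\mathbb{R}^{|V|})$ is an open convex set.

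Third, existence amounts to identifying $\Phi(\mathbb{R}^{|V|})$ with $\mathcal{L}$, and this is the main obstacle. The clean tool is that the gradient image of a $C^1$ convex function on $\mathbb{R}^{|V|}$ equals the interior of a convex set whose support function is the recession function $\mathbb{E}_\infty(\eta)=\lim_{t\to\infty}\mathbb{E}(t\eta)/t$; hence it suffices to show $\mathbb{E}_\infty(\eta)=\sup_{L\in\mathcal{L}}\langle L,\eta\rangle$ for every direction $\eta$, and in particular for the indicator directions $\eta=\mathbf{1}_W$ that furnish the facets of $\mathcal{L}$. Computing $\mathbb{E}_\infty(\mathbf{1}_W)$ reduces to the asymptotics of $\sum_{v\in W}L_v$ as the circles of $W$ degenerate, which is governed face-by-face by two competing caps: shrinking a single $C_v$ with its neighbours fixed drives $L_{v,f}\uparrow\pi$ (via the limit \eqref{ae2}), giving the bound $N(f,W)\pi$, whereas collapsing all vertices of a face simultaneously forces $\sum_{v\in V_f}L_{v,f}\uparrow(N(f)-2)\pi+Y_f$ by a Gauss--Bonnet accounting on $\hat f$ with its cone angle $\alpha_f=2\pi-Y_f$; since the two degenerations cannot be realized at once, the supremum over $f$ is exactly $\pi\min\{N(f,W),N(f)-2+Y_f/\pi\}$, and summing over $F_W$ recovers the defining inequalities of $\mathcal{L}$ with strict sign (suprema are not attained). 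The delicate points, and where the real work lies, are the uniform control of these degenerations across all three vertex types and the verification that no other facets arise, i.e.\ that $\mathbb{E}_\infty$ matches the support function of $\mathcal{L}$ in every direction; equivalently one may phrase the argument as a properness/degree statement showing $\Phi$ is a proper homeomorphism onto the open convex set $\mathcal{L}$.
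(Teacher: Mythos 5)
Your proposal follows the same overall architecture as the paper's proof: pass to logarithmic-type coordinates, integrate the closed $1$-form $\sum_i L_i\,\mathrm{d}u_i$ to a potential, prove strict convexity, get rigidity from injectivity of the gradient, and identify the image with $\mathcal{L}$ through the degeneration asymptotics of the $L_{i,f}$ (which you correctly locate: $L_{v,f}\to\pi$ as a single circle shrinks, and the Gauss--Bonnet cap $\sum_{v\in V_f}L_{v,f}\to(N(f)-2)\pi+Y_f$ when enough vertices of $f$ degenerate simultaneously — these are exactly Lemmas \ref{limit01}, \ref{limit02} and \ref{limita2}). The one place you genuinely diverge is the surjectivity step: you invoke Legendre duality, computing the recession function $\mathcal{E}_\infty$ and matching it to the support function of $\mathcal{L}$, whereas the paper argues more topologically — the gradient is a smooth embedding (Lemma \ref{injective}), so by invariance of domain the image is open in $\mathcal{L}$, and the limit lemmas show that any sequence escaping to infinity in $\mathbb{R}^{|V|}$ has image converging to $\partial\mathcal{L}$, so the image is also closed in $\mathcal{L}$. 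Both routes rest on the same asymptotic lemmas; the duality formulation is arguably cleaner conceptually but obliges you to control $\mathcal{E}_\infty(\eta)$ for \emph{all} directions $\eta$ (mixed signs, unequal rates), which is essentially the same work as the paper's analysis of sequences $s^m\to a$ with $a_i\in\{\pm\infty\}$. One correction to your convexity step: you plan to show each per-face Hessian block is only positive semidefinite and recover definiteness from connectivity of the $1$-skeleton, but semidefiniteness plus connectivity does not by itself kill the kernel here; in fact each face block is already strictly diagonally dominant with positive diagonal, since $\partial\bigl(\sum_i L_{i,f}\bigr)/\partial k_j>0$ by \eqref{kiestimate} (Lemmas \ref{area_change} and \ref{convex}, Proposition \ref{pro 3.1}), so each block is positive definite outright and no connectivity hypothesis is needed.
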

 
\subsubsection{Combinatorial p-th Calabi  flow}


Motivated by the method of Lin and Zhang \cite{Lin}, we construct a combinatorial $p$-th Calabi flow with respect  to the total geodesic curvatures in hyperbolic background geometry as follows.

Let $g: V \rightarrow \mathbb{R}$ be a function on $ V$. For $p>1$, we construct the discrete $p$-th Laplace operator $\Delta_p$ on $g$, i.e.
\begin{align}\label{lap} \Delta_p g_i=\sum_{j\sim i}A_{ij}|g_j-g_i|^{p-2}(g_j-g_i),~~\forall i\in V,
\end{align}
where $i\sim j$ denotes the vertices $i,j$ adjacent and
$$A_{ij}=-\frac{\partial(\sum_{f\in F_{\{i\}}}L_{i,f})}{\partial k_j}k_j, ~~i\sim j, i\ne j,$$ $F_{\{i\}}$ is the set of all faces in $F$ with the vertex $i$. Then we can define the following combinatorial $p$-th Calabi flow.

\begin{definition}[Combinatorial $p$-th Calabi flow]
\begin{align}
\frac{d k_i}{dt}=k_i(\Delta_p -K_i)(L_i-{\hat{L}}_i),~~\forall i\in V,\label{p-calabi_flow}
\end{align}
where
\begin{align}\label{Ki}
   K_i=-k_i\frac{\partial}{\partial k_i}(\sum_{f\in F_{\{i\}}}\text{Area}(\Omega_f)) 
\end{align}
and $\hat{L}=(\hat{L}_1,\cdots,\hat{L}_{|V|})^T$ is a given vector in $ \mathbb{R}_{>0}^{\vert V\vert}$ which represents the prescribed total geodesic curvatures.
\end{definition}

\textbf{Remark.} For $p=2$, the combinatorial $p$-th Calabi flow \ref{p-calabi_flow} is the usual combinatorial Calabi flow for the total geodesic curvatures in hyperbolic background geometry.  
   
\begin{theorem}[Main theorem]\label{flowthm}
 Suppose $\hat{L}=(\hat{L}_1,\cdots,\hat{L}_{\vert V\vert})^T\in \mathbb{R}_{>0}^{\vert V\vert}$
is a vector defined on $V$. The following statements are
equivalent:
\begin{enumerate}
    \item $\hat{L}\in\mathcal{L}$, where $\mathcal{L}$ is defined in Theorem \ref{main1};

\item The solution of the combinatorial $p$-th Calabi flow (\ref{p-calabi_flow}) exists for all time $t\in [0,+\infty)$ and converges to a generalized circle packing metric with the total geodesic curvature $\hat{L}$.  
\end{enumerate}
 \end{theorem}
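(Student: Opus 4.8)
The plan is to treat the flow \eqref{p-calabi_flow} as a descent flow for a convex potential supplied by the variational principle behind Theorem~\ref{main1}, and to use the description of $\mathcal{L}$ to control its long-time behaviour. I would first pass to the coordinates $u_i=\log k_i\in\mathbb{R}$, in which \eqref{p-calabi_flow} becomes the autonomous system $\dot u_i=\Delta_p(L-\hat L)_i-K_i(L_i-\hat L_i)$ and the geodesic curvatures stay automatically in $(0,+\infty)$. The variational principle underlying Theorem~\ref{main1} furnishes a strictly convex smooth function $E$ on $\mathbb{R}^{|V|}$ with $\partial E/\partial u_i=L_i$, whose Hessian $\Lambda=(\partial L_i/\partial u_j)$ is symmetric and positive definite; moreover $\partial L_i/\partial u_j=-A_{ij}$ with $A_{ij}=A_{ji}\ge 0$ for $i\sim j$, and $K_i\ge 0$. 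For $p=2$ one has $\Delta_2 g-Kg=-\Lambda g$, so the flow is the Euclidean negative gradient flow of the Calabi energy $\tfrac12\|L-\hat L\|^2$; for general $p>1$ the convex potential itself will serve as the Lyapunov function.

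The implication (2)$\Rightarrow$(1) is immediate: convergence to a generalized circle packing metric with total geodesic curvature $\hat L$ exhibits such a metric, so $\hat L\in\mathcal{L}$ by Theorem~\ref{main1}. For (1)$\Rightarrow$(2), short-time existence on some $[0,T)$ follows from continuity of the right-hand side, since $g\mapsto|g_j-g_i|^{p-2}(g_j-g_i)$ is continuous for $p>1$ and $L,A_{ij},K_i$ are smooth in $u$ by Lemma~\ref{packing}; for $p\ge 2$ the field is locally Lipschitz, giving uniqueness. The decisive object is the Lyapunov function $\tilde E(u)=E(u)-\sum_i\hat L_i u_i$, which is strictly convex with $\nabla\tilde E=L-\hat L=:g$ and whose unique critical point $u^\ast$ satisfies $L(u^\ast)=\hat L$; such a $u^\ast$ exists precisely because $\hat L\in\mathcal{L}$. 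Summation by parts using $A_{ij}=A_{ji}$ yields
\[
\frac{d}{dt}\tilde E(u(t))=\sum_i g_i\big(\Delta_p g_i-K_i g_i\big)=-\tfrac12\sum_{i\sim j}A_{ij}|g_i-g_j|^{p}-\sum_i K_i g_i^2\le 0,
\]
and, once $A_{ij}>0$ and $K_i>0$ are known, the right-hand side vanishes if and only if $g\equiv 0$.

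Thus $\tilde E$ decreases along the flow. Because $E$ is convex with $\nabla E(\mathbb{R}^{|V|})=\mathcal{L}$ an open convex set (the existence half of Theorem~\ref{main1}), the shifted potential $\tilde E$ is coercive on $\mathbb{R}^{|V|}$ exactly when $0\in\mathcal{L}-\hat L$, i.e.\ when $\hat L\in\mathcal{L}$; this is the step in which the combinatorial inequalities defining $\mathcal{L}$ enter. Coercivity makes each sublevel set $\{\tilde E\le\tilde E(u(0))\}$ compact, so the solution remains in a fixed compact set and therefore extends to all of $[0,+\infty)$. For convergence, monotonicity and boundedness below give $\int_0^\infty\big(-\tfrac{d}{dt}\tilde E\big)\,dt<\infty$, hence $\tfrac{d}{dt}\tilde E\to 0$ along some $t_n\to\infty$; by compactness $u(t_n)\to\bar u$, at which the descent identity forces $g(\bar u)=0$, so $\bar u=u^\ast$ by rigidity. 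Then $\tilde E(u(t))\downarrow\tilde E(u^\ast)=\min\tilde E$, and strict convexity together with coercivity upgrade this to $u(t)\to u^\ast$, i.e.\ $k(t)$ converges to the generalized circle packing metric realizing $\hat L$.

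I expect the principal difficulties to be geometric rather than dynamical. The first is verifying the positivity $A_{ij}>0$ and $K_i>0$ (equivalently, the correct signs of $\partial L_i/\partial k_j$ and of $\partial_{k_i}\text{Area}(\Omega_f)$), since these alone guarantee that the descent identity above has a definite sign and that $\tilde E$ is a genuine strict Lyapunov function; this requires the curvature/area monotonicity of the generalized conical polygons. The second is pinning down that coercivity of $\tilde E$ is equivalent to $\hat L\in\mathcal{L}$, where the terms $\min\{N(f,W),N(f)-2+Y_f/\pi\}$ in the definition of $\mathcal{L}$ must be matched to the growth of $E$ along rays; this should already be contained in Theorem~\ref{main1}. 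A secondary, purely technical point is the loss of Lipschitz regularity of the $p$-Laplacian when $1<p<2$, which must be accommodated in the short-time existence (and uniqueness) step.
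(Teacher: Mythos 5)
Your proposal is correct and its overall architecture coincides with the paper's: change of variables $s_i=\ln k_i$, the potential $\Theta(s)=\int_{\hat s}^{s}\sum_i(L_i-\hat L_i)\,\mathrm{d}s_i$ as Lyapunov function, the descent identity obtained by summation by parts from $A_{ij}=A_{ji}\ge 0$ together with positive definiteness of $K$, properness of the strictly convex potential once $\hat L\in\mathcal{L}$ supplies a critical point via Theorem \ref{main1}, and then subsequential convergence to the unique critical point upgraded to full convergence. The one genuine divergence is in the long-time existence step: the paper (Theorem \ref{Prior estimate}) proves existence for all time \emph{unconditionally}, for any $\hat L\in\mathbb{R}_{>0}^{|V|}$, by deriving explicit uniform bounds $|A_{ij}|,|K_i|\le\mu$ and $0<L_{i,f}<\pi$ so that the vector field of \eqref{equi p-calabi_flow} is globally bounded; you instead obtain long-time existence only under the hypothesis $\hat L\in\mathcal{L}$, by trapping the trajectory in a compact sublevel set of the coercive Lyapunov function. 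Your route is shorter and suffices for the equivalence being proved, while the paper's buys a stronger standalone statement (no blow-up regardless of the target curvature) at the cost of the elementary but lengthy estimates on $K_i$ in Section \ref{sec4}. Two small remarks: strict positivity $A_{ij}>0$ is neither proved in the paper nor needed --- Proposition \ref{pro 4.4} only gives $A_{ij}\ge 0$, and the vanishing of $\frac{d}{dt}\Theta(s(t))$ already forces $L=\hat L$ through the term $-(L-\hat L)^TK(L-\hat L)$ alone; and for $1<p<2$ the paper likewise settles for Peano existence without uniqueness, so your flagged regularity concern is handled exactly as you anticipate.
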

 \textbf{Organization.} We organize the paper as follows. In Section \ref{sec2}, we recall some facts for total geodesic curvatures on polygons and give similar conclusions on  generalized circle packing metric with a conical
angle. In section \ref{sec3}, we study the limit behavior of geodesic curvatures and prove Theorem \ref{main1}. In section \ref{sec4}, we prove the long time existence of combinatorial $p$-th Calabi flows \eqref{equi p-calabi_flow}. In section \ref{sec5}, we give some results and prove Theorem \ref{flowthm}.

\section{The potential functions for total geodesic curvatures on polygons}\label{sec2}
In this section, we consider the variational principle for total geodesic curvatures on polygons. Given an abstract polygon $f$ with a vertex set $V_{f}$ consisting of $v_1,\cdots,v_n$. For $k=(k_1,\cdots,k_n)\in \mathbb{R}_{>0}^{n}$ on $V_{f}$ and $(2-n)\pi<Y_f<2\pi$, by Lemma \ref{packing}, there exists a geometric pattern $\mathcal{P}$ formed by $n$ generalized circles $C_1,\cdots,C_n$ on $\mathbb{H}^2$ with geodesic curvatures $k=(k_1,\cdots,k_n)$ and conical angle $\alpha_f=2\pi-Y_f$. Then we have the following lemma.

\begin{lemma}\label{closed}
By $l_{i,f}$ we denote the length of the sub-arc $C_{i,f}$ of $C_i$ contained in dual circle $C_f$. Then the differential form
    \[
    \eta=\sum_{i=1}^{n}l_{i,f}\mathrm{d}k_i
    \]
     is closed.
\end{lemma}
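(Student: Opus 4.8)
The plan is to reduce closedness of $\eta$ to the symmetry of the Jacobian of the map $k\mapsto(l_{1,f},\dots,l_{n,f})$. For a fixed $Y_f$ the admissible parameters form the convex, hence simply connected, set $\mathbb{R}_{>0}^{n}$, since by Lemma \ref{packing} every $k\in\mathbb{R}_{>0}^{n}$ produces a unique $k_f\in(1,+\infty)$ with $C^1$ dependence $k_f=k_f(k)$. By the Poincar\'e lemma, $\eta=\sum_{i}l_{i,f}\,\mathrm{d}k_i$ is closed if and only if
\[
\frac{\partial l_{i,f}}{\partial k_j}=\frac{\partial l_{j,f}}{\partial k_i}\qquad\text{for all }i\neq j,
\]
the diagonal imposing no condition. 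Thus everything reduces to this off-diagonal symmetry.

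The structural point I would establish first is that $l_{i,f}$ depends on the whole vector $k$ only through the pair $(k_i,k_f)$. Indeed, because $C_f\perp C_i$ and $C_f$ passes through the two tangent points of $C_i$ with its neighbours, those two tangent points are exactly the two points of $C_i\cap C_f$; hence $C_{i,f}$ is precisely the arc of $C_i$ spanning the quadrilateral $(C_f,C_i)$ from the proof of Lemma \ref{packing}, and its length is an intrinsic function $l_{i,f}=l(k_i,k_f)$ of that quadrilateral alone (as is $\theta_i=\theta(k_f,k_i)$). Therefore, for $j\neq i$,
\[
\frac{\partial l_{i,f}}{\partial k_j}=\frac{\partial l_{i,f}}{\partial k_f}\,\frac{\partial k_f}{\partial k_j}.
\]
Differentiating the defining identity $\sum_{m=1}^{n}\theta(k_f,k_m)=\alpha_f=2\pi-Y_f$ with respect to $k_j$ gives $\partial k_f/\partial k_j=-D^{-1}\,\partial\theta_j/\partial k_j$, where $D:=\sum_{m}\partial\theta_m/\partial k_f>0$ by \eqref{ae1}. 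Hence
\[
\frac{\partial l_{i,f}}{\partial k_j}=-\frac{1}{D}\,\frac{\partial l_{i,f}}{\partial k_f}\,\frac{\partial\theta_j}{\partial k_j}.
\]

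This last expression is symmetric in $i,j$ as soon as the single--quadrilateral identity
\[
\frac{\partial l(k_i,k_f)}{\partial k_f}=c(k_f)\,\frac{\partial\theta(k_f,k_i)}{\partial k_i}
\]
holds with a factor $c(k_f)$ independent of $k_i$; for then $\partial l_{i,f}/\partial k_j=-D^{-1}c(k_f)\,(\partial_{k_i}\theta_i)(\partial_{k_j}\theta_j)$, which is manifestly invariant under $i\leftrightarrow j$, so $\eta$ is closed. Establishing this identity is the one genuine computation. I would use that the quadrilateral $(C_f,C_i)$ is symmetric across the geodesic joining the two centres, so each half is a hyperbolic right triangle with legs $r_f,r_i$ and hypotenuse the centre distance $d$, the right angle sitting at the intersection point because $C_f\perp C_i$ forces the radii to be orthogonal there. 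Hyperbolic Pythagoras gives $\cosh d=\cosh r_f\cosh r_i$, and the right--triangle relations yield $\tan(\theta/2)=\sqrt{k_f^2-1}/k_i$ (matching \eqref{ae1}) together with an explicit arc length via Table \ref{relation}. In the circle case $k_i>1$ a direct differentiation then gives $\partial_{k_f}l=-2/(k_f^2+k_i^2-1)$ and $\partial_{k_i}\theta=-2\sqrt{k_f^2-1}/(k_f^2+k_i^2-1)$, whose ratio is $c(k_f)=1/\sqrt{k_f^2-1}$, indeed free of $k_i$.

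The main obstacle is thus confined to this last step, and within it to the uniformity in $k_i$ of the ratio: one must check that the same value $1/\sqrt{k_f^2-1}$ also arises in the horocycle case $k_i=1$ and the hypercycle case $0<k_i<1$, where the arc-length entry of Table \ref{relation} changes. I expect to dispatch these either by repeating the elementary computation case by case, or, more economically, by invoking the $C^1$ dependence from Lemma \ref{packing} to extend the identity from $k_i>1$ by continuity. Once the uniformity of $c(k_f)$ is secured, symmetry of the Jacobian, and hence closedness of $\eta$, follows immediately.
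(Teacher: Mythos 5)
Your argument is correct in substance but follows a genuinely different route from the paper. The paper does not compute any cross partials: it adds $l_f\,\mathrm{d}k_f$ to $\eta$ and observes that $\eta+l_f\,\mathrm{d}k_f=\sum_{i}\bigl(l_{i,f}\,\mathrm{d}k_i+l_{i,f}^*\,\mathrm{d}k_f\bigr)$, where $l_{i,f}^*$ is the arc of $C_f$ inside $C_i$; each summand is the pullback of a closed two-variable form on a single quadrilateral (Lemma 2.7 of \cite{BHS}), and $l_f\,\mathrm{d}k_f$ is exact since $l_f$ depends on $k_f$ alone, so $\eta$ is closed. Your single-quadrilateral identity $\partial_{k_f}l(k_i,k_f)=c(k_f)\,\partial_{k_i}\theta(k_f,k_i)$ with $c(k_f)=1/\sqrt{k_f^2-1}$ is in fact exactly the content of that cited lemma, since $l_{i,f}^*=\theta(k_f,k_i)/\sqrt{k_f^2-1}$ and closedness of $l_{i,f}\,\mathrm{d}k_i+l_{i,f}^*\,\mathrm{d}k_f$ means $\partial_{k_f}l_{i,f}=\partial_{k_i}l_{i,f}^*$; so you have rediscovered and reproved the key input rather than citing it, and then assembled the global statement by an explicit implicit-function/chain-rule computation instead of the paper's add-and-subtract trick. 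The trade-off: your version is self-contained and makes the mechanism (a rank-one symmetric Jacobian of the off-diagonal block) completely transparent, while the paper's version is shorter and avoids differentiating the constraint $\sum_m\theta(k_f,k_m)=\alpha_f$ at all. Two small repairs are needed. First, the Poincar\'e lemma is irrelevant here: closedness of a $1$-form \emph{is} the symmetry of cross partials, by definition of $\mathrm{d}\eta$; no simple connectivity is used at this stage. Second, your proposed shortcut of extending the identity from $k_i>1$ to $0<k_i<1$ ``by continuity'' does not work, since $\{k_i>1\}$ is not dense in $\{k_i>0\}$; you must either do the hypercycle and horocycle cases explicitly (they do work: from Table \ref{relation} and $\tanh(\theta^{(i)}/2)=\sqrt{1-k_i^2}/k_f$ one gets $l=\tfrac{2}{\sqrt{1-k_i^2}}\arctanh\tfrac{\sqrt{1-k_i^2}}{k_f}$ and again $\partial_{k_f}l=-2/(k_f^2+k_i^2-1)$, so the same $c(k_f)$ appears) or argue by analytic continuation in the variable $k_i^2-1$, observing that $\arctan(\sqrt{x})/\sqrt{x}$ and $\arctanh(\sqrt{-x})/\sqrt{-x}$ are the same analytic function.
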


\begin{proof}
By $l_f$ we denote the length of dual circle $C_f$ and by $l_{i,f}^*$ we denote the length of the sub-arc $C_{i,f}^*$ of $C_f$ contained in $C_i$ as shown in Figure \ref{2.1}. 

\begin{figure}[htbp]
\centering
\includegraphics[scale=0.40]{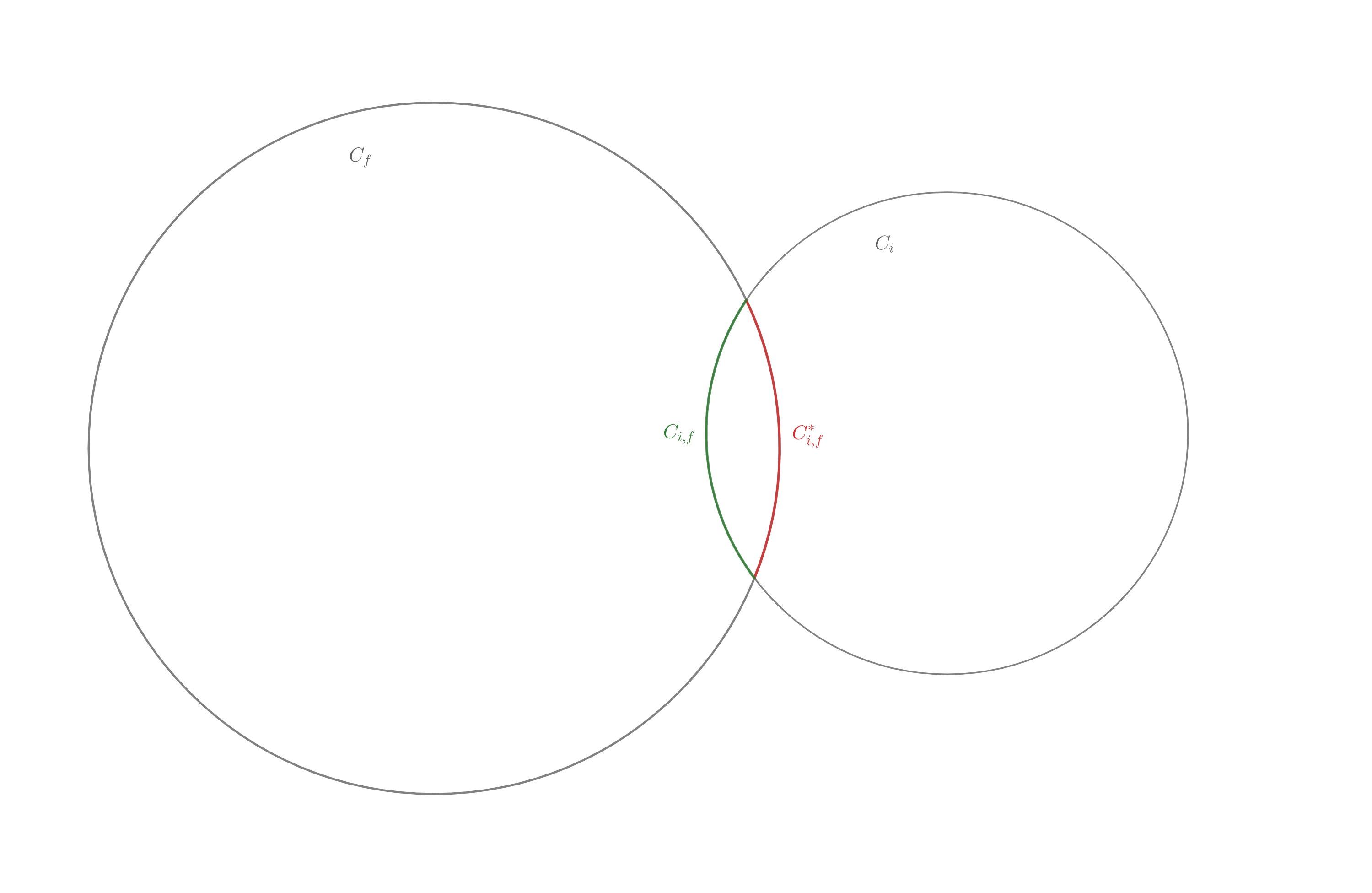}
\captionof{figure}{\small The sub-arc $C_{i,f}$ and $C_{i,f}^*$.}
  \label{2.1}
\end{figure} 

Then we have
$$\eta+l_fdk_f=l_fdk_f+\sum_{i=1}^{n}l_{i,f}\mathrm{d}k_i=\sum_{i=1}^{n}l_{i,f}\mathrm{d}k_i+\sum_{i=1}^{n}l_{i,f}^*\mathrm{d}k_f=\sum_{i=1}^{n}l_{i,f}\mathrm{d}k_i+l_{i,f}^*\mathrm{d}k_f.$$ By $k_f>1$ and Lemma 2.7 in \cite{BHS}, we know that $l_{i,f}\mathrm{d}k_i+l_{i,f}^*\mathrm{d}k_f$ is closed, $i=1,\cdots,n$. Since $l_f$ is the function with respect to $k_f$, $l_fdk_f$ is closed. Hence the 1-form $\eta$ is closed.   
\end{proof}
By $L_{i,f}=l_{i,f}\cdot k_i$ we denote the total geodesic curvature of sub-arc $C_{i,f}$ of $C_i$ contained in dual circle $C_f$. It is easy to know that $L_{i,f}$ is the smooth function with respect to $(k_1,\cdots ,k_n)$.
Set $s_i=\ln k_i$ and by Lemma \ref{closed}, we know that the form 
\[\omega_{f}=\sum_{i=1}^n L_{i,f} \mathrm{d}
s_{i},\]
is closed.
Hence we can define a potential function on $\mathbb{R}^n$ as follow:
\[
\mathcal{E}_f(s)=\int^{s}_{0}\omega_{f},\]
which is well-defined.

\begin{figure}[htbp]
\centering
\includegraphics[scale=0.40]{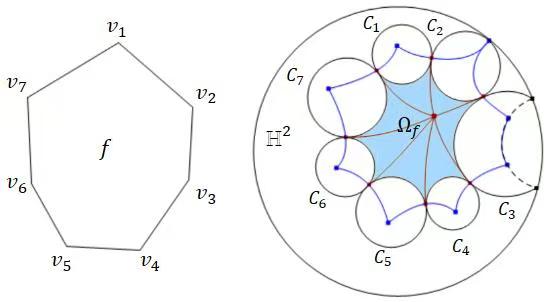}
\captionof{figure}{\small An example of polygon $f$ and region $\Omega_{f}$.}
  \label{area}
\end{figure} 

\begin{lemma}\label{gauss}
    Let $\Omega_f$ be the region with conical angle $0<\alpha_{f}<n\pi$ enclosed by sub-arcs $C_{i,f}, i=1,\cdots,n$ as shown in Figure \ref{area}. Then we have 
    \[
    \area(\Omega_f)=n\pi-\alpha_{f}-\sum_{i=1}^{n}L_{i,f}.
    \]
\end{lemma}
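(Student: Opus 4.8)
The plan is to compute $\area(\Omega_f)$ via the Gauss--Bonnet theorem, after cutting $\Omega_f$ into $n$ pieces by geodesic radii of the dual circle. Write $O$ for the center of the dual circle $C_f$ (the conical point), and let $t_1,\dots,t_n$ be the tangent points, $t_i=C_i\cap C_{i+1}$ (indices mod $n$), so that the sub-arc $C_{i,f}$ runs from $t_{i-1}$ to $t_i$ along $C_i$ and all the $t_i$ lie on $C_f$. Joining $O$ to each $t_i$ by a geodesic segment, I would cut $\Omega_f$ into $n$ curvilinear triangles $R_1,\dots,R_n$, where $R_i$ is bounded by the two geodesic radii $Ot_{i-1}$, $Ot_i$ and by the arc $C_{i,f}$. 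These radii lie in the interior of $\Omega_f$, so $\area(\Omega_f)=\sum_{i=1}^n\area(R_i)$, and each $R_i$ is an ordinary hyperbolic region containing no interior cone point.

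Next I would record the three geometric inputs to Gauss--Bonnet on $R_i$. First, the interior angle of $R_i$ at $O$ is exactly the angle $\theta_i=\angle t_{i-1}Ot_i$ from the proof of Lemma~\ref{packing}, and $\sum_{i=1}^n\theta_i=\alpha_f$ by construction. Second, since $C_f$ meets $C_i$ orthogonally at $t_i$, the radius $Ot_i$ (perpendicular to $C_f$) is parallel to the tangent of $C_i$ at $t_i$, hence tangent to $C_i$; so the geodesic radius and the arc $C_{i,f}$ leave $t_i$ in the same direction and bound a cusp, giving interior angle $0$ (exterior angle $\pi$) at each of $t_{i-1}$ and $t_i$. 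Third, the center of $C_i$ lies outside the dual circle while $R_i$ lies on the opposite side of the arc, so $R_i$ sits on the convex side of $C_{i,f}$; traversing $\partial R_i$ positively the geodesic curvature along $C_{i,f}$ is $-k_i$, whence $\int_{C_{i,f}}k_g\,\mathrm{d}s=-k_i\,l_{i,f}=-L_{i,f}$, while $k_g=0$ along the two geodesic radii.

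With these in hand, Gauss--Bonnet on $R_i$ in the hyperbolic plane ($K\equiv-1$, Euler characteristic $1$) reads
\[
-\area(R_i)+\int_{\partial R_i}k_g\,\mathrm{d}s+\big[(\pi-\theta_i)+\pi+\pi\big]=2\pi,
\]
i.e. $-\area(R_i)-L_{i,f}+3\pi-\theta_i=2\pi$, so that $\area(R_i)=\pi-\theta_i-L_{i,f}$. Summing over $i$ and using $\sum_{i=1}^n\theta_i=\alpha_f$ gives $\area(\Omega_f)=n\pi-\alpha_f-\sum_{i=1}^n L_{i,f}$, which is the claim.

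I expect the main obstacle to be the sign and angle conventions rather than the computation: verifying that $\Omega_f$ lies on the convex side of every arc (so $k_g=-k_i$, not $+k_i$) and that orthogonality of $C_f$ and $C_i$ forces the cusp (interior angle $0$) at each tangent point. Both follow from the orthogonality condition together with the fact that the centers of the $C_i$ lie outside $C_f$. One should also confirm these two facts survive the degenerate cases where $C_i$ is a horocycle or a hypercycle, since only the local tangency at $t_i$ and the constant geodesic curvature $k_i$ are used in the argument, and both of these are insensitive to the type of generalized circle.
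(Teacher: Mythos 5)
Your proof is correct and follows the same route as the paper, whose entire argument is the single sentence ``this lemma can be directly deduced by the Gauss--Bonnet theorem.'' Your write-up simply supplies the details the paper omits, and the key points you flag --- the cusp (zero interior angle) at each tangent point forced by the orthogonality $C_f\perp C_i$, the sign $k_g=-k_i$ because $\Omega_f$ lies outside each $C_i$, and the identification of the angle at $O$ with $\theta_i$ so that $\sum_i\theta_i=\alpha_f$ --- are exactly the right ones and remain valid for horocycles and hypercycles.
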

\begin{proof}
    This lemma can be directly deduced by the Gauss-Bonnet theorem.
\end{proof}

\begin{lemma}\label{area_change}
Given conical angle $0<\alpha_{f}<n\pi$, for geodesic curvatures $k=(k_1,\cdots,k_n)\in \mathbb{R}_{>0}^{n}$ on $V_{f}$, we have 
\begin{equation}\label{dd}
\pp{(\sum_{i=1}^{n}L_{i,f})}{k_j}>0, ~~ 1\leq j\leq n,
\end{equation}
and 
\[
\frac{\partial L_{i,f}}{\partial k_j}<0,~~\forall i\neq j.
\]
As a consequence, 
\[
\pp{\area{(\Omega_f})}{k_j}<0, ~~ 1\leq j\leq n.
\]
\end{lemma}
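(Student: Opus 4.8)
The plan is to reduce every assertion to an explicit one–variable inequality, exploiting that each $\theta_i$ and each $L_{i,f}$ depends only on the pair $(k_i,k_f)$, while $k_f=k_f(k_1,\dots,k_n)$ is determined implicitly by the constraint $\sum_{i=1}^n\theta_i(k_f,k_i)=\alpha_f$ with $\alpha_f$ held fixed and $k_f\in(1,+\infty)$ by Lemma \ref{packing}. The last assertion is then free: by Lemma \ref{gauss} one has $\area(\Omega_f)=n\pi-\alpha_f-\sum_{i=1}^n L_{i,f}$ with $\alpha_f$ constant, so $\partial\area(\Omega_f)/\partial k_j=-\partial\big(\sum_i L_{i,f}\big)/\partial k_j$, and the area inequality is exactly the negative of \eqref{dd}.

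First I would record the closed forms. Inspecting the right hyperbolic triangle spanned by the two centers and a perpendicular intersection point of $C_i$ and $C_f$, together with Table \ref{relation}, gives, for $k_i>1$,
\[
\theta_i=2\arctan\frac{\sqrt{k_f^2-1}}{k_i},\qquad L_{i,f}=\frac{2k_i}{\sqrt{k_i^2-1}}\arctan\frac{\sqrt{k_i^2-1}}{k_f};
\]
the first formula is valid for all $k_i>0$ since only $\sqrt{k_f^2-1}$ enters (and it recovers \eqref{ae1}--\eqref{ae2}), while the second passes to the horocycle case $k_i=1$ by continuity and to the hypercycle case $0<k_i<1$ on replacing $\arctan$ by $\arctanh$ and $\sqrt{k_i^2-1}$ by $\sqrt{1-k_i^2}$. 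Differentiating yields the sign table
\[
\frac{\partial\theta_i}{\partial k_f}>0,\qquad\frac{\partial\theta_j}{\partial k_j}=-\frac{2\sqrt{k_f^2-1}}{k_j^2+k_f^2-1}<0,\qquad\frac{\partial L_{i,f}}{\partial k_f}=-\frac{2k_i}{k_i^2+k_f^2-1}<0,
\]
each valid for all $k_i>0$ because $k_f>1$. Differentiating the constraint $\sum_i\theta_i=\alpha_f$ in $k_j$ then gives
\[
\frac{\partial k_f}{\partial k_j}=-\frac{\partial\theta_j/\partial k_j}{\sum_{i}\partial\theta_i/\partial k_f}>0.
\]
The estimate $\partial L_{i,f}/\partial k_j<0$ for $i\neq j$ is now immediate, since such an $L_{i,f}$ feels $k_j$ only through $k_f$: $\partial L_{i,f}/\partial k_j=(\partial L_{i,f}/\partial k_f)(\partial k_f/\partial k_j)<0$.

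For \eqref{dd} I would split
\[
\frac{\partial}{\partial k_j}\sum_{i=1}^n L_{i,f}=\frac{\partial L_{j,f}}{\partial k_j}\bigg|_{k_f}+\Big(\sum_{i=1}^n\frac{\partial L_{i,f}}{\partial k_f}\Big)\frac{\partial k_f}{\partial k_j}.
\]
Here sits the only genuine difficulty: the direct term is positive, the indirect term is negative, and they nearly cancel, so a bare sign count fails. The plan is to insert the explicit formulas and simplify; in the ratio $\big(\sum_i\partial L_{i,f}/\partial k_f\big)\big/\big(\sum_i\partial\theta_i/\partial k_f\big)$ carried by the indirect term, the common factor $\sum_i k_i/(k_i^2+k_f^2-1)$ cancels, leaving just $-\sqrt{k_f^2-1}/k_f$, after which the whole expression collapses to
\[
\frac{\partial}{\partial k_j}\sum_{i=1}^n L_{i,f}=\frac{2}{(k_j^2-1)^{3/2}}\left(\frac{\sqrt{k_j^2-1}}{k_f}-\arctan\frac{\sqrt{k_j^2-1}}{k_f}\right)>0,
\]
the positivity being the elementary inequality $x>\arctan x$ for $x>0$. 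The hypercycle case produces instead $\frac{2}{(1-k_j^2)^{3/2}}\big(\arctanh\frac{\sqrt{1-k_j^2}}{k_f}-\frac{\sqrt{1-k_j^2}}{k_f}\big)>0$, from $\arctanh x>x$ on $(0,1)$, and the horocycle case $k_j=1$ follows by continuity, the limiting value being $2/(3k_f^3)>0$. This establishes \eqref{dd}, and with it the area inequality.
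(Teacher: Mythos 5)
Your proposal is correct and takes essentially the same route as the paper: the paper's own proof simply quotes the formula for $\partial\bigl(\sum_{i}L_{i,f}\bigr)/\partial k_j$ (its equation \eqref{kiestimate}) and the off-diagonal sign from Lemmas 2.5 and 2.8 of \cite{HHSZ} and then applies Lemma \ref{gauss}, whereas you rederive those cited facts by implicit differentiation of the constraint $\sum_i\theta_i(k_f,k_i)=\alpha_f$. Your computation is sound --- the cancellation $\bigl(\sum_i\partial L_{i,f}/\partial k_f\bigr)\big/\bigl(\sum_i\partial\theta_i/\partial k_f\bigr)=-\sqrt{k_f^2-1}/k_f$ and the resulting closed form (including the limiting value $2/(3k_f^3)$ at $k_j=1$) agree exactly with \eqref{kiestimate} --- so you have in effect supplied a self-contained proof of the external lemmas the paper relies on.
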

    \begin{proof}
By Lemma 2.5 in \cite{HHSZ}, we have 
\begin{align}\label{kiestimate}\frac{\partial(\sum_{i=1}^{n}L_{i,f})}{\partial k_j}=\left\{
\begin{array}{lc}
2(k_j^2-1)^{-\frac{3}{2}}(\frac{\sqrt{k_j^2-1}}{k_f}-\arctan{\frac{\sqrt{k_j^2-1}}{k_f}}),&k_j>1, \\\frac{2}{3k_f^3},&k_j=1,\\ 2(1-k_j^2)^{-\frac{3}{2}}(\arctanh{\frac{\sqrt{1-k_j^2}}{k_f}}-{\frac{\sqrt{1-k_j^2}}{k_f}}),&0<k_j<1.
\end{array}
\right.
\end{align}
Hence we know that $$\pp{(\sum_{i=1}^{n}L_{i,f})}{k_j}>0, ~~ 1\leq j\leq n.$$
By the proof of Lemma 2.8 in \cite{HHSZ}, we have that 
\[
\frac{\partial L_{i,f}}{\partial k_j}<0,~~\forall i\neq j.
\]
Moreover, we have
\begin{align}\label{bound}
0<-\pp{L_{i,f}}{k_j} <\sum_{m\neq j}-\pp{L_{m,f}}{k_j}<-\frac{2(k_f^2-1)}{k_f(1-k_f^2-k_j^2)},~~\forall i\neq j.
\end{align}
By Lemma \ref{gauss}, we obtain
$$
\pp{\area{(\Omega_f})}{k_j}=-\pp{(\sum_{i=1}^{n}L_{i,f})}{k_j}<0, ~~ 1\leq j\leq n.$$
\end{proof}

\begin{lemma}\label{convex}
The potential function $\mathcal{E}_f(s)$ is strictly convex.
\end{lemma}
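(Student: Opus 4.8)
The plan is to prove strict convexity by showing that the Hessian of $\mathcal{E}_f$ in the coordinates $s=(s_1,\dots,s_n)$ is positive definite everywhere on $\mathbb{R}^n$. First I would exploit the definition of $\mathcal{E}_f$ as the path integral of the closed form $\omega_f=\sum_{i=1}^n L_{i,f}\,\mathrm{d}s_i$: this immediately gives $\partial\mathcal{E}_f/\partial s_i=L_{i,f}$, so that
\[
\Hessian\mathcal{E}_f=\left(\frac{\partial L_{i,f}}{\partial s_j}\right)_{1\le i,j\le n}.
\]
Since $s_j=\ln k_j$ yields $\partial/\partial s_j=k_j\,\partial/\partial k_j$, each entry of the Hessian equals $k_j\,\partial L_{i,f}/\partial k_j$.

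Next I would extract the two structural features of this matrix. Symmetry is free: the closedness of $\omega_f$ established in Lemma \ref{closed} is precisely the statement $\partial L_{i,f}/\partial s_j=\partial L_{j,f}/\partial s_i$. The sign pattern comes from Lemma \ref{area_change}, which gives $\partial L_{i,f}/\partial k_j<0$ for all $i\neq j$, together with the positivity of the column sums $\sum_{i=1}^n \partial L_{i,f}/\partial k_j=\partial(\sum_i L_{i,f})/\partial k_j>0$. Combining these, for each column $j$ the positive factor $k_j$ preserves signs and
\[
k_j\frac{\partial L_{j,f}}{\partial k_j}-\sum_{i\neq j}\left|k_j\frac{\partial L_{i,f}}{\partial k_j}\right|
=k_j\left(\frac{\partial L_{j,f}}{\partial k_j}+\sum_{i\neq j}\frac{\partial L_{i,f}}{\partial k_j}\right)
=k_j\,\frac{\partial\left(\sum_{i}L_{i,f}\right)}{\partial k_j}>0,
\]
where the negative signs of the off-diagonal terms let me drop the absolute values. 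In particular every diagonal entry is positive and strictly exceeds the sum of the absolute values of the other entries in its column, i.e. $\Hessian\mathcal{E}_f$ is strictly diagonally dominant.

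Finally I would conclude with a standard fact: a real symmetric, strictly diagonally dominant matrix with positive diagonal is positive definite, since its eigenvalues are real and, by the Gershgorin circle theorem applied to $(\Hessian\mathcal{E}_f)^{\mathsf T}$ (equivalently, using symmetry), lie in discs centered at the positive diagonal entries with radii smaller than those entries, hence are all positive. Thus $\mathcal{E}_f$ is strictly convex. I do not expect any genuine obstacle here: the entire argument is an assembly of the preceding lemmas, and the only point needing care is the observation that the two inequalities of Lemma \ref{area_change} combine to give exactly \emph{column} diagonal dominance, with the symmetry required to pass from diagonal dominance to positive definiteness supplied automatically by the closedness in Lemma \ref{closed}.
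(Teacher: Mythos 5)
Your proof is correct and follows essentially the same route as the paper's: compute $\Hessian\mathcal{E}_f=(\partial L_{i,f}/\partial s_j)$, use Lemma \ref{area_change} for the sign pattern and the positive column sums, and conclude that the Hessian is a symmetric, strictly diagonally dominant matrix with positive diagonal, hence positive definite. The only cosmetic difference is that you make explicit the appeal to closedness of $\omega_f$ for symmetry and to Gershgorin for the final step, both of which the paper leaves implicit.
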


\begin{proof}
By the definition of $\mathcal{E}_f(s)$, we have that
$$
\nabla\mathcal{E}_f=(L_{1,f},\cdots,L_{n,f})^T,~~\text{Hess}~\mathcal{E}_f=(\frac{\partial L_{i,f}}{\partial s_j})_{n\times n}. $$
By Lemma \ref{area_change}, we have that 
$$
\frac{\partial L_{i,f}}{\partial s_j}=k_j\frac{\partial L_{i,f}}{\partial k_j}<0,~~\forall i\neq j.
$$
Moreover, we have that
$$
\frac{\partial L_{j,f}}{\partial s_j}=\pp{(\sum_{i=1}^{n}L_{i,f})}{s_j}-\pp{(\sum_{i\ne j}L_{i,f})}{s_j}=k_j\pp{(\sum_{i=1}^{n}L_{i,f})}{k_j}-k_j\pp{(\sum_{i\ne j}L_{i,f})}{k_j}>0,~~1\leq j\leq n.$$
Then we obtain
$$\left\vert\frac{\partial L_{j,f}}{\partial s_j}\right\vert-\sum_{k\neq j}\left\vert\pp{L_{k,f}}{s_j}\right\vert=
\frac{\partial(\sum_{i=1}^nL_{i,f})}{\partial s_j}=k_j\frac{\partial(\sum_{i=1}^nL_{i,f})}{\partial k_j}>0,~~1\leq j\leq n.$$
By above argument, $\text{Hess}~\mathcal{E}_f$ is a symmetric strictly diagonally dominant matrix with positive diagonal entries, which yields that $\text{Hess}~\mathcal{E}_f$ is positive definite. Hence the potential function $\mathcal{E}_f(s)$ is strictly convex.
\end{proof}

\section{The existence and rigidity of generalized circle packing}\label{sec3}

\subsection{Limit behavior}

Given an abstract polygon $f$ with a vertex set $V_{f}=\{1,\cdots,n\}$ and discrete Gaussian curvature $(2-n)\pi<Y_f<2\pi$, we can construct a geodesic curvature sequence $\{k^m=(k_1^m,\cdots,k_n^m)\}_{m=1}^{+\infty}\subset \mathbb{R}_{>0}^{n}$ on $V_{f}$, by Lemma \ref{packing}, there exists a generalized circle packing sequence $\{\mathcal{P}^m\}_{m=1}^{+\infty}$ of the polygon $f$ with geodesic curvatures $\{k^m\}_{m=1}^{+\infty}$, dual circles $\{C_f^m\}_{m=1}^{+\infty}$ and conical angle $\alpha_f=2\pi-Y_f$, where the generalized circle packing $\mathcal{P}^m=\{C_i^m\}_{i=1}^n$. 

By $L_{i,f}^m$ we denote the total geodesic curvature of sub-arc $C_{i,f}^m$ of $C_i^m$ contained in dual circle $C_f^m$. Let $k^*=(k^*_1,\cdots,k^*_n)\in [0,+\infty]^n$ be a non-negative (possible infinity) vector. Then we study the limit behavior of $L_{i,f}^m$ as $k^m\rightarrow k^* (m\to +\infty)$.

\begin{lemma}\label{limit01}
If $k^m\rightarrow 0 (m\to +\infty)$, then $\lim_{m \to +\infty}L_{i,f}^m=0$, $i=1,\cdots,n$.
\end{lemma}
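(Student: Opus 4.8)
The plan is to reduce the statement to a direct asymptotic estimate on a closed-form expression for $L_{i,f}$. First I would record the explicit formula for the total geodesic curvature of the sub-arc $C_{i,f}$. The two generalized circles $C_f$ and $C_i$ meet orthogonally, so the triangle formed by the centre $O$ of $C_f$, a crossing point $P\in C_f\cap C_i$, and the centre/axis datum of $C_i$ is right-angled at $P$; the same right-angled-triangle computation that (after integrating \eqref{ae1} with the boundary condition \eqref{ae2}) gives $\theta_i=2\arctan\frac{\sqrt{k_f^2-1}}{k_i}$ in Lemma~\ref{packing} also yields the inner angle of the sub-arc, and combining it with the arc-length relations of Table~\ref{relation} produces
\[
L_{i,f}=\frac{2k_i}{\sqrt{k_i^2-1}}\arctan\frac{\sqrt{k_i^2-1}}{k_f}\ \ (k_i>1),
\qquad
L_{i,f}=\frac{2k_i}{\sqrt{1-k_i^2}}\arctanh\frac{\sqrt{1-k_i^2}}{k_f}\ \ (0<k_i<1),
\]
with the common limiting value $2/k_f$ at $k_i=1$ (the horocyclic case); differentiating the sum of these, with $k_f=k_f(k)$ substituted from the cone-angle constraint, reproduces \eqref{kiestimate}, which is a useful consistency check (cf.\ also \cite{BHS}). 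Since $k^m\to 0$, for all large $m$ every component satisfies $0<k_i^m<1$, so only the hypercyclic formula is relevant.

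Next I would carry out the estimate. Note that $\tfrac{\sqrt{1-k_i^2}}{k_f}<1$ always holds (because $k_f>1$), so the $\arctanh$ is well defined, and I would use the elementary bounds $\arctanh x\le\tfrac12\ln\frac{2}{1-x}$ for $x\in(0,1)$ together with
\[
1-\frac{\sqrt{1-(k_i^m)^2}}{k_f^m}\ \ge\ 1-\sqrt{1-(k_i^m)^2}\ \ge\ \tfrac12 (k_i^m)^2,
\]
where the first inequality uses only $k_f^m>1$. Substituting gives, for all large $m$,
\[
0<L_{i,f}^m=\frac{2k_i^m}{\sqrt{1-(k_i^m)^2}}\arctanh\frac{\sqrt{1-(k_i^m)^2}}{k_f^m}
\ \le\ \frac{k_i^m}{\sqrt{1-(k_i^m)^2}}\,\ln\frac{4}{(k_i^m)^2}.
\]
As $k_i^m\to 0$ the right-hand side is asymptotic to $2k_i^m\lvert\ln k_i^m\rvert\to 0$, so squeezing the positive quantity $L_{i,f}^m$ between $0$ and this bound yields $\lim_{m\to\infty}L_{i,f}^m=0$ for each $i$. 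A pleasant feature of this route is that the bound is self-contained: it requires nothing about $k_f^m$ beyond $k_f^m>1$, which is automatic. (For geometric orientation one can still observe, from the constraint $\sum_i 2\arctan\frac{\sqrt{(k_f^m)^2-1}}{k_i^m}=\alpha_f<n\pi$, that $k_f^m\to 1^+$: were $k_f^m$ bounded away from $1$, every summand would tend to $\pi$ as $k_i^m\to 0$, forcing the sum to $n\pi$, a contradiction. This says the $C_i$ flatten to geodesics while the dual circle $C_f$ expands, consistent with vanishing total curvature.)

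The step I expect to be delicate is precisely the limit in the hypercyclic formula, which is an $\infty\cdot 0$ indeterminate form: the sub-arc's inner angle $\arctanh\frac{\sqrt{1-k_i^2}}{k_f}$ diverges (logarithmically) as $k_i\to 0$, $k_f\to 1$, while the factor $\tfrac{k_i}{\sqrt{1-k_i^2}}$ converting arc length to total geodesic curvature vanishes only linearly. The displayed estimate is exactly the quantitative statement that the linear vanishing dominates the logarithmic blow-up. The only other point requiring care is establishing (or at least correctly quoting from the orthogonal-circle trigonometry of \cite{BHS}) the explicit formula for $L_{i,f}$ in the hypercyclic regime, where the ``centre'' of $C_i$ is a geodesic axis rather than a point; once that formula is in hand the remainder is a routine squeeze argument.
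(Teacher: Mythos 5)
Your proof is correct, but it takes a genuinely different route from the paper's. The paper never writes down a general closed form for $L_{i,f}^m$; instead it first treats the fully symmetric configuration $\tilde{k}_i^m=\tfrac{1}{m+1}$, where symmetry forces $\tilde{\theta}_{i,f}^m=\alpha_f/n$ and hence gives an \emph{explicit} value of $\tilde{k}_f^m$, computes the bound $\tilde{L}_{i,f}^m\lesssim \frac{1}{m}\ln m\to 0$ there, and then handles a general sequence $k^m\to 0$ by a comparison argument: once $k_i^m<\tilde{k}_i^m$ for all $i$, the monotonicity \eqref{dd} gives $\sum_i L_{i,f}^m\le\sum_i\tilde{L}_{i,f}^m$. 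You instead work directly with the general configuration, using the closed form $L_{i,f}=\frac{2k_i}{\sqrt{1-k_i^2}}\arctanh\frac{\sqrt{1-k_i^2}}{k_f}$ (which is legitimate: the orthogonal pair $(C_i,C_f)$ is determined up to isometry by $(k_i,k_f)$, and this is exactly the identity $\coth\frac{\theta_i}{2}=\frac{k_f}{\sqrt{1-k_i^2}}$ that the paper itself invokes in the symmetric case) together with the crude but sufficient input $k_f>1$; your elementary bounds $\arctanh x\le\frac12\ln\frac{2}{1-x}$ and $1-\sqrt{1-k_i^2}\ge\frac12 k_i^2$ are correct and yield the same linear-beats-logarithmic conclusion. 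What your approach buys is that it dispenses entirely with the auxiliary symmetric sequence and the monotonicity comparison from Lemma \ref{area_change}, and it makes transparent that the only fact needed about the dual circle is $k_f>1$; what the paper's approach buys is that it avoids asserting the general-position arc-angle formula and leans only on the symmetric computation plus the already-established derivative estimate \eqref{dd}. The one place where you should be more careful in a final write-up is the provenance of the closed form: it does not come from integrating \eqref{ae1} (that produces the angle $\theta_{i,f}=2\arctan\frac{\sqrt{k_f^2-1}}{k_i}$ at the center of $C_f$, not the inner angle of $C_{i,f}$), so you should quote the orthogonal-quadrilateral identity for $\theta_i$ from \cite{HHSZ} or \cite{BHS} directly rather than via Lemma \ref{packing}.
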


\begin{proof}
By $\theta_i^m$ we denote the angle of sub-arc $C_{i,f}^m$ at the center of $C_i^m$ and by $\theta_{i,f}^m$ we denote the angle of sub-arc $C_{i,f}^{m*}$ of $C_f^m$ contained in $C_i^m$ at the center of $C_f^m$ as shown in Figure \ref{3.1}.

\begin{figure}[htbp]
\centering
\includegraphics[scale=0.40]{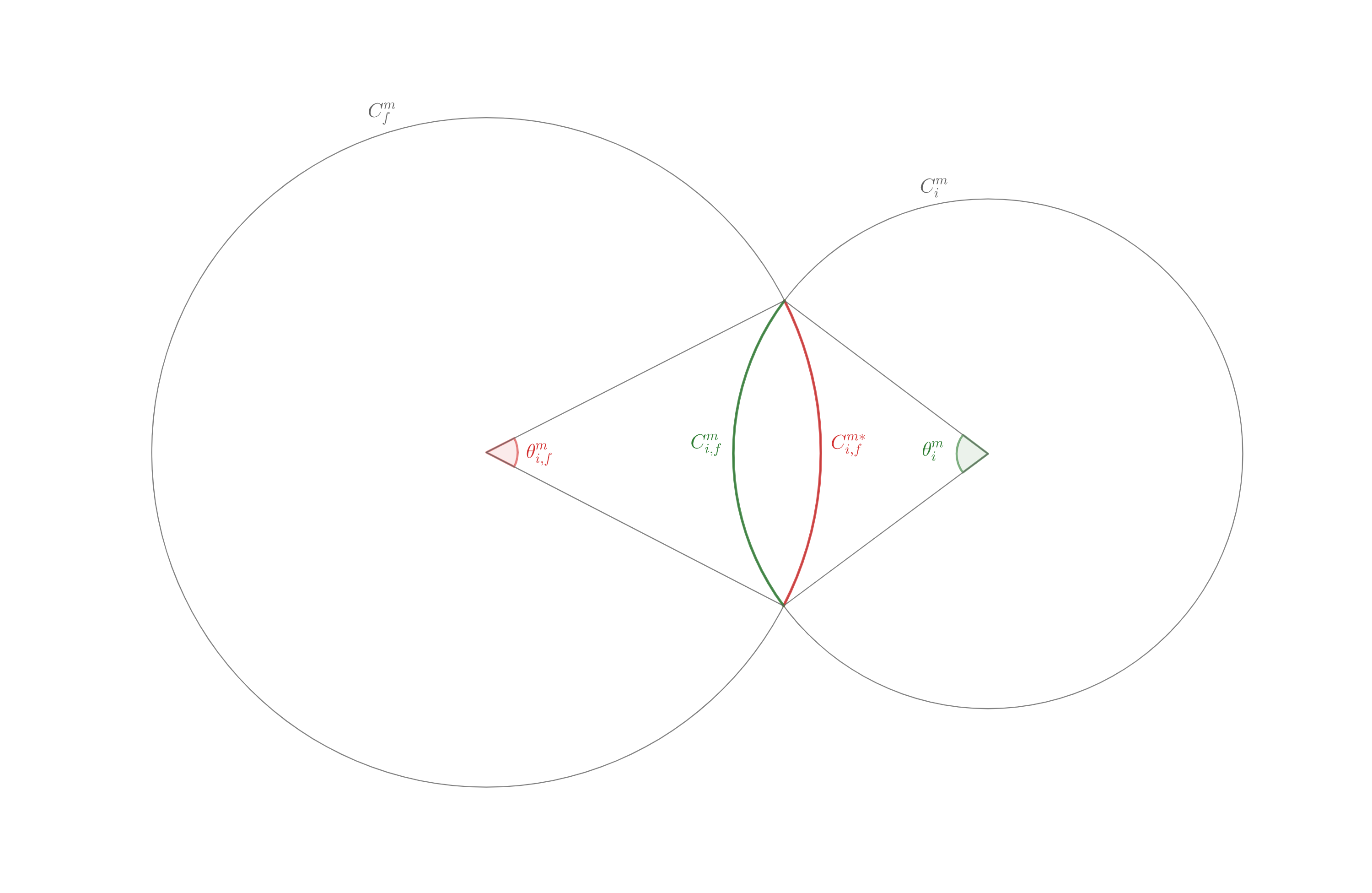}
\captionof{figure}{\small The angles $\theta_i^m$ and $\theta_{i,f}^m$ of sub-arcs $C_{i,f}^m$ and $C_{i,f}^{m*}$.}
  \label{3.1}
\end{figure} 

We first consider a special case of this lemma, we choose a geodesic curvature sequence $\{\tilde{k}^m=(\tilde{k}_1^m,\cdots,\tilde{k}_n^m)\}_{m=1}^{+\infty}$, where $\tilde{k}_i^m=\frac{1}{m+1}$, $i=1,\cdots,n$. By $\tilde{\theta}_i^m$ we denote the angle of sub-arc $C_{i,f}^m$, by $\tilde{\theta}_{i,f}^m$ we denote the angle of sub-arc $C_{i,f}^{m*}$ and by $\tilde{k}_f^m$ we denote the geodesic curvature of dual circle $C_f^m$ at this time. Then by symmetry we know that $\tilde{\theta}_{i,f}^m=\frac{\alpha_f}{n}$, $i=1,\cdots,n$. Since $0<\tilde{k}_i^m=\frac{1}{m+1}<1$, $m\ge 1$, by the law of the hyperbolic quadrilateral, we have 
$$
\cot\frac{\tilde{\theta}_{i,f}^m}{2}=\frac{\tilde{k}_i^m}{\sqrt{(\tilde{k}_f^m)^2-1}},
$$
then we obtain
\begin{equation}\label{e1}
\tilde{k}_f^m=\sqrt{\frac{\tan^2\frac{\tilde{\theta}_{i,f}^m}{2}}{(m+1)^2}+1}=\sqrt{\frac{\tan^2\frac{\alpha_f}{2n} }{(m+1)^2}+1}.
\end{equation}
Since $0<\tilde{k}_i^m=\frac{1}{m+1}<1$, $m\ge 1$, by hyperbolic trigonometric identities, we have
$$
\coth\frac{\tilde{\theta}_i^m}{2}=\frac{\tilde{k}_f^m}{\sqrt{1-(\tilde{k}_i^m)^2}},
$$
then we obtain
\begin{equation}\label{e2}
\tilde{\theta}_i^m=2\arccoth \frac{\tilde{k}_f^m}{\sqrt{1-(\tilde{k}_i^m)^2}}.
\end{equation}
By (\ref{e1}) and (\ref{e2}), we have
\begin{align*}
0<\tilde{\theta}_i^m&=2\arccoth \frac{\tilde{k}_f^m}{\sqrt{1-(\tilde{k}_i^m)^2}}\\
&=2\arccoth\sqrt{\frac{\frac{\tan^2\frac{\alpha_f}{2n} }{(m+1)^2}+1}{1-\frac{1}{(m+1)^2}}}\\
&=\ln \frac{\sqrt{1+\frac{\tan^2\frac{\alpha_f}{2n} }{(m+1)^2} }+\sqrt{1-\frac{1}{(m+1)^2} }}{\sqrt{1+\frac{\tan^2\frac{\alpha_f}{2n} }{(m+1)^2} }-\sqrt{1-\frac{1}{(m+1)^2} }}.
\end{align*}
Then we know that 
\begin{equation}\label{fg}
\begin{aligned}
0<\tilde{\theta}_i^m&=\ln \frac{\sqrt{1+\frac{\tan^2\frac{\alpha_f}{2n} }{(m+1)^2} }+\sqrt{1-\frac{1}{(m+1)^2} }}{\sqrt{1+\frac{\tan^2\frac{\alpha_f}{2n} }{(m+1)^2} }-\sqrt{1-\frac{1}{(m+1)^2} }}\\
&<\ln \frac{2+1}{\sqrt{1+\frac{\tan^2\frac{\alpha_f}{2n} }{(m+1)^2} }-\sqrt{1-\frac{1}{(m+1)^2} }}\\
&=\ln 3-\ln \bigg(\sqrt{1+\frac{\tan^2\frac{\alpha_f}{2n} }{(m+1)^2} }-\sqrt{1-\frac{1}{(m+1)^2} }\bigg)\\
&<\ln 3-\ln \bigg(\sqrt{1+\frac{\tan^2\frac{\alpha_f}{2n} }{(m+1)^2} }-1\bigg),
\end{aligned}
\end{equation}
as $m$ is sufficiently large.

By the Table \ref{relation} and (\ref{fg}), we have 
\begin{equation}\label{dg}
    0<\tilde{L}_{i,f}^m=\frac{\tilde{k}_i^m\tilde{\theta}_i^m}{\sqrt{1-(\tilde{k}_i^m)^2} } <\frac{1}{\sqrt{(m+1)^2-1} }\bigg(\ln 3-\ln \bigg(\sqrt{1+\frac{\tan^2\frac{\alpha_f}{2n} }{(m+1)^2} }-1\bigg)\bigg),~~i=1,\cdots,n,
\end{equation}
as $m$ is sufficiently large.

Since $k^m\rightarrow 0 (m\to +\infty)$, then $0<k_i^m<\frac{1}{m+1}=\tilde{k}_i^m$, $i=1,\cdots,n$ as $m$ is sufficiently large. By (\ref{dd}) and (\ref{dg}), for each $1\le i\le n$, we have 
$$
0<L_{i,f}^m\le\sum_{i=1}^nL_{i,f}^m\le\sum_{i=1}^n\tilde{L}_{i,f}^m\le\frac{n}{\sqrt{(m+1)^2-1} }\bigg(\ln 3-\ln \bigg(\sqrt{1+\frac{\tan^2\frac{\alpha_f}{2n} }{(m+1)^2} }-1\bigg)\bigg)\to 0(m\to +\infty).
$$ 
Hence we obtain $\lim_{m \to +\infty}L_{i,f}^m=0$, $i=1,\cdots,n$.
\end{proof}

\begin{lemma}\label{limit02}
    If $k^*_i=0$ for some $i$ $(1\leq i\leq n)$, then 
$\lim_{m\rightarrow +\infty}L_{i,f}^m=0$.
\end{lemma}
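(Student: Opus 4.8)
The plan is to estimate $L_{i,f}^m$ directly from the geometry of the single arc $C_{i,f}^m$, reusing the hyperbolic quadrilateral computation from the proof of Lemma~\ref{limit01} but now isolating the one index $i$ whose curvature degenerates. First, since $k_i^m\to k_i^*=0$, we have $0<k_i^m<1$ for all large $m$, so $C_i^m$ is a hypercycle. By Table~\ref{relation} the arc length of $C_{i,f}^m$ equals $\theta_i^m/\sqrt{1-(k_i^m)^2}$, where $\theta_i^m$ is the inner angle at the center of $C_i^m$, whence
\[
L_{i,f}^m=k_i^m\, l_{i,f}^m=\frac{k_i^m\,\theta_i^m}{\sqrt{1-(k_i^m)^2}}.
\]
Applying the hyperbolic quadrilateral law exactly as in the derivation of \eqref{e2}, and setting $y_m:=k_f^m/\sqrt{1-(k_i^m)^2}>1$, one obtains $\theta_i^m=2\arccoth y_m=\ln\frac{y_m+1}{y_m-1}$.

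The main estimate then requires no information on the dual curvature $k_f^m$ beyond the universal inequality $k_f^m>1$. Using $k_f^m>1$ and $\sqrt{1-(k_i^m)^2}\le 1$ one checks
\[
y_m-1\ge k_f^m-\sqrt{1-(k_i^m)^2}>1-\sqrt{1-(k_i^m)^2}=\frac{(k_i^m)^2}{1+\sqrt{1-(k_i^m)^2}}\ge\frac{(k_i^m)^2}{2}.
\]
Rewriting $\frac{y_m+1}{y_m-1}=1+\frac{2}{y_m-1}$ sidesteps any need for an \emph{upper} bound on $k_f^m$ and gives $\theta_i^m<\ln\bigl(1+4/(k_i^m)^2\bigr)$. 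Consequently
\[
0<L_{i,f}^m<\frac{k_i^m}{\sqrt{1-(k_i^m)^2}}\,\ln\Bigl(1+\frac{4}{(k_i^m)^2}\Bigr),
\]
and since $t\ln(1+4/t^2)\to 0$ as $t\to 0^+$ while $\sqrt{1-(k_i^m)^2}\to 1$, the right-hand side tends to $0$. Hence $\lim_{m\to+\infty}L_{i,f}^m=0$.

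The conceptual difficulty --- and the reason this does not follow at once from Lemma~\ref{limit01} --- is that $L_{i,f}^m$ involves $k_f^m$, which is coupled through the cone-angle equation to all the remaining curvatures $k_j^m$ ($j\ne i$); these need not go to $0$, and $k_f^m$ may well approach $1$, forcing $\theta_i^m\to+\infty$. The heart of the matter is thus to show that this blow-up is only logarithmic in $k_i^m$ and is therefore killed by the prefactor $k_i^m\to 0$; the identity $\frac{y_m+1}{y_m-1}=1+\frac{2}{y_m-1}$ together with the lower bound $y_m-1\ge (k_i^m)^2/2$ is precisely what makes this quantitative and lets us avoid analyzing $k_f^m$ altogether. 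Note also that the monotonicity comparison of \eqref{dd} exploited in Lemma~\ref{limit01} is unavailable here, since now only a single coordinate of $k^m$ is small, which is exactly why the direct geometric estimate is the natural route.
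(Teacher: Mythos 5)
Your proof is correct, but it takes a genuinely different route from the paper. The paper reduces Lemma~\ref{limit02} to Lemma~\ref{limit01} by a monotonicity comparison: setting $W=\{i\mid k_i^*=0\}$, it builds an auxiliary sequence $\tilde k^m$ that agrees with $k^m$ on $W$ but has all remaining coordinates decreased so that $\tilde k^m\to 0$; since $\partial L_{i,f}/\partial k_j<0$ for $j\neq i$ (Lemma~\ref{area_change}), this only increases $L_{i,f}$, giving $0<L_{i,f}^m<\tilde L_{i,f}^m\to 0$ by Lemma~\ref{limit01}. So your remark that monotonicity is ``unavailable here'' is not quite right: what is unavailable is the aggregate inequality \eqref{dd} used in Lemma~\ref{limit01}, but the off-diagonal sign $\partial L_{i,f}/\partial k_j<0$ does the job, and that is exactly the paper's argument. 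Your alternative is a direct quantitative estimate: from the quadrilateral relation behind \eqref{e2} (which is a local identity for the perpendicular pair $(C_i,C_f)$ and so does not require the symmetry used in Lemma~\ref{limit01}), you get $\theta_i^m=\ln\frac{y_m+1}{y_m-1}$ with $y_m=k_f^m/\sqrt{1-(k_i^m)^2}$, and the lower bound $y_m-1\ge (k_i^m)^2/2$, which uses only $k_f^m>1$, shows the angle blows up at most logarithmically in $k_i^m$ and is annihilated by the prefactor $k_i^m$. Your chain of inequalities checks out ($y_m-1\ge k_f^m-\sqrt{1-(k_i^m)^2}>1-\sqrt{1-(k_i^m)^2}\ge (k_i^m)^2/2$, hence $\theta_i^m<\ln(1+4/(k_i^m)^2)$, hence $L_{i,f}^m\to 0$). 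What your version buys is independence from both Lemma~\ref{limit01} and Lemma~\ref{area_change}, plus an explicit rate $O\bigl(k_i^m\ln(1/k_i^m)\bigr)$; what the paper's version buys is brevity given the machinery it has already established.
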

\begin{proof}
By $W$ we denote a subset of $V_f$, i.e. $W=\{i\in V_f~|~k^*_i=0\}$. For the geodesic curvature sequence $\{k^m\}_{m=1}^{+\infty}$, we construct another geodesic curvature sequence $\{\tilde{k}^m\}_{m=1}^{+\infty}$ such that $\tilde{k}_i^m=k_i^m, i\in W$, $\tilde{k}_j^m<k_j^m, j\in V_f-W$ and $\tilde{k}^m\to 0(m\to +\infty)$. 

For each $i\in W$ and $j\in V_f-W$, by Lemma \ref{area_change}, we know that $\frac{\partial L_{i,f}}{\partial k_j}<0$. Then $\forall i\in W$, we obtain $0<L_{i,f}^m<\tilde{L}_{i,f}^m$. By Lemma \ref{limit01}, we have
$$
0\le \lim_{m\to +\infty}L_{i,f}^m\le \lim_{m\to +\infty}\tilde{L}_{i,f}^m=0,~~\forall i\in W.
$$
Then we obtain $\lim_{m\rightarrow +\infty}L_{i,f}^m=0, \forall i\in W$. This completes the proof.
\end{proof}

\begin{lemma}\label{limita2}
 Let $I=\{i\in V_f~|~k^*_i=+\infty\}$, then 
    \begin{align}
     \lim_{m\to +\infty }\sum_{i\in I}L_{i,f}^m&=|I|\pi,~~\text{if}~~0\le|I|< n-2+\frac{Y_f}{\pi},\label{limit2} 
    \\\lim_{m\to +\infty}\sum_{i\in I}L_{i,f}^m&=(n-2)\pi+Y_f,~~\text{if}~~n-2+\frac{Y_f}{\pi}\le|I|\le n.\label{limit3}
     \end{align} 
Equivalently, we have
$$
 \lim_{m\to +\infty }\sum_{i\in I}L_{i,f}^m=\pi\min\bigg\{|I|, n-2+\frac{Y_{f}}{\pi}\bigg\}.
 $$
 \end{lemma}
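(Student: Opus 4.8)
The plan is to reduce the whole statement to an analysis of the angles of the configuration and then to track how the curvature $k_f^m$ of the dual circle behaves as $k^m\to k^*$. For a vertex $i$ with $k_i^m>1$ write $r_i$ for its radius, so that $k_i^m=\coth r_i$; the sub-arc $C_{i,f}^m$ is then a genuine circular arc, and by Table \ref{relation} its length is $l_{i,f}=\theta_i^m\sinh r_i$, whence
\[
L_{i,f}^m=k_i^m\, l_{i,f}=\theta_i^m\cosh r_i=\theta_i^m\,\frac{k_i^m}{\sqrt{(k_i^m)^2-1}}.
\]
Applying the hyperbolic right-triangle relations to the half of the orthogonal ``quadrilateral'' spanned by the centers of $C_i$, $C_f$ and one of their (perpendicular) intersection points gives, consistently with the identities used in \eqref{e1}--\eqref{e2},
\[
\tan\frac{\theta_i^m}{2}=\frac{\sqrt{(k_i^m)^2-1}}{k_f^m},\qquad
\tan\frac{\theta_{i,f}^m}{2}=\frac{\sqrt{(k_f^m)^2-1}}{k_i^m},\qquad
\sum_{i=1}^n\theta_{i,f}^m=\alpha_f=2\pi-Y_f.
\]
Since $k_i^m\to+\infty$ for $i\in I$, we have $\cosh r_i\to1$ uniformly over the finite set $I$, so $L_{i,f}^m$ and $\theta_i^m$ share the same limit, and the problem becomes one about the angles $\theta_i^m$.

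The heart of the argument is a dichotomy for $k_f^m$, which I would establish by subsequences: as $[1,+\infty]$ is compact, it suffices to show that along any subsequence with $k_f^m\to L\in[1,+\infty]$ the quantity $\sum_{i\in I}L_{i,f}^m$ tends to the claimed value. The case $L=1$ is impossible, since $k_f^m\to1$ forces every $\theta_{i,f}^m\to0$ and hence $\alpha_f=0$, contradicting $\alpha_f=2\pi-Y_f>0$. If $1<L<+\infty$, then for $i\in I$ one has $\theta_{i,f}^m\to0$, so $\sum_{j\notin I}\theta_{j,f}^m\to\alpha_f$; since each angle is $<\pi$ this requires $(n-|I|)\pi\ge\alpha_f$, i.e. $|I|\le n-2+\tfrac{Y_f}{\pi}$, and then $\theta_i^m=2\arctan\frac{\sqrt{(k_i^m)^2-1}}{k_f^m}\to\pi$, giving $\sum_{i\in I}L_{i,f}^m\to|I|\pi$. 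If $L=+\infty$, then for $j\notin I$ (where $k_j^m\to k_j^*<+\infty$) one has $\theta_{j,f}^m\to\pi$, so $\sum_{j\notin I}\theta_{j,f}^m\to(n-|I|)\pi$ and $\sum_{i\in I}\theta_{i,f}^m\to\alpha_f-(n-|I|)\pi\ge0$, forcing $|I|\ge n-2+\tfrac{Y_f}{\pi}$. Thus the two regimes correspond exactly to the two ranges of $|I|$, overlapping only at the boundary value.

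For the divergent regime I would compute $\sum_{i\in I}\theta_i^m$ using the elementary fact that $\arctan a+\arctan b\to\tfrac{\pi}{2}$ whenever $a,b>0$ and $ab\to1$. Indeed, for $i\in I$,
\[
\tan\frac{\theta_i^m}{2}\cdot\tan\frac{\theta_{i,f}^m}{2}
=\frac{\sqrt{(k_i^m)^2-1}}{k_f^m}\cdot\frac{\sqrt{(k_f^m)^2-1}}{k_i^m}
=\sqrt{\Big(1-\tfrac{1}{(k_i^m)^2}\Big)\Big(1-\tfrac{1}{(k_f^m)^2}\Big)}\to1,
\]
so $\theta_i^m+\theta_{i,f}^m\to\pi$; summing over the finite set $I$ and using $\sum_{i\in I}\theta_{i,f}^m\to\alpha_f-(n-|I|)\pi$ gives
\[
\sum_{i\in I}\theta_i^m\to|I|\pi-\big(\alpha_f-(n-|I|)\pi\big)=(n-2)\pi+Y_f.
\]
With $\cosh r_i\to1$ this yields $\sum_{i\in I}L_{i,f}^m\to(n-2)\pi+Y_f$, which is \eqref{limit3}, while the bounded regime yields \eqref{limit2}; the unified formula $\pi\min\{|I|,\,n-2+\tfrac{Y_f}{\pi}\}$ follows immediately.

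The main obstacle I anticipate is not any isolated computation but the clean handling of the $k_f^m$ dichotomy: showing that the position of $|I|$ relative to $n-2+\tfrac{Y_f}{\pi}$ forces the correct asymptotic regime for $k_f^m$, and checking that at the boundary value (and when some $k_j^*=0$ for $j\notin I$, which still gives $\theta_{j,f}^m\to\pi$) both regimes produce the same limit, so that the subsequential argument actually pins down the limit of the full sequence. The monotonicity and limiting behaviour of $\theta_{i,f}$ in $k_f$ recorded in \eqref{ae1}--\eqref{ae2} are precisely what make this bookkeeping rigorous.
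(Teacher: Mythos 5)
Your proposal is correct in substance, and in the bounded regime it coincides with the paper's argument: both exclude the wrong asymptotic behaviour of $k_f^m$ by noting that $\sum_{j\notin I}\theta_{j,f}^m$ cannot exceed $\alpha_f$ while each summand tends to $\pi$ as $k_f^m\to+\infty$, and then conclude $\theta_i^m\to\pi$ for $i\in I$. Where you genuinely diverge is the case $|I|\ge n-2+\frac{Y_f}{\pi}$: the paper obtains the value $(n-2)\pi+Y_f$ from Gauss--Bonnet (Lemma \ref{gauss}), first establishing the a priori bound $\sum_{i\in I}L_{i,f}^m<(n-2)\pi+Y_f$ and then showing that along any subsequence with a smaller limit one must have $k_f^{m}\to+\infty$, hence $\operatorname{Area}(\Omega_f^m)\to0$ and $L_{j,f}^m\to0$ for $j\notin I$, so the curvature budget is exhausted by $I$. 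You instead reach the same value from the supplementary-angle identity $\theta_i^m+\theta_{i,f}^m\to\pi$ (via $\arctan a+\arctan b\to\tfrac{\pi}{2}$ when $ab\to1$) combined with the constraint $\sum_{i}\theta_{i,f}^m=\alpha_f$. Your route avoids Gauss--Bonnet and the auxiliary fact $L_{j,f}^m\to0$, at the cost of invoking the explicit quadrilateral relations $\tan(\theta_i/2)=\sqrt{k_i^2-1}/k_f$ and $\tan(\theta_{i,f}/2)=\sqrt{k_f^2-1}/k_i$ for all three vertex types (the paper only records these in special cases); your subsequential-compactness framing also makes the bookkeeping of which regime occurs, and the agreement of the two formulas at the boundary value of $|I|$, more transparent than the paper's two separate contradiction arguments.

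One assertion does need repair: the claim that $k_f^m\to1$ is impossible. When several $k_j^m\to0$, the angles $\theta_{j,f}^m=2\arctan\bigl(\sqrt{(k_f^m)^2-1}/k_j^m\bigr)$ are indeterminate and need not vanish, so $\alpha_f>0$ does not rule out the limit $L=1$; it actually occurs, e.g.\ for $n=4$, $\alpha_f=\pi$, $k_1^m=m$, $k_2^m=k_3^m=k_4^m=1/m$. The slip is harmless because your analysis of $1<L<+\infty$ uses only that $k_f^m$ stays bounded, so that $\sqrt{(k_i^m)^2-1}/k_f^m\to+\infty$ and $\sqrt{(k_f^m)^2-1}/k_i^m\to0$ for $i\in I$; these hold equally when $L=1$. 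Simply merge the two cases into a single case $L<+\infty$ rather than excluding $L=1$, and the dichotomy closes correctly.
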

\begin{proof}
If $0\le|I|< n-2+\frac{Y_f}{\pi}$, we claim that the geodesic curvature sequence $\{k_f^m\}_{m=1}^{+\infty}$ of dual circles $\{C_f^m\}_{m=1}^{+\infty}$ has an upper bound $\gamma<+\infty$. If not, by Lemma 2.1 in \cite{HHSZ}, we obtain 
$$
\lim_{m\to +\infty}\theta_{i,f}^m=\pi,~~\forall i\notin I.
$$
Hence we have 
$$
\lim_{m\to +\infty}\sum_{i\notin I}\theta_{i,f}^m=(n-|I|)\pi>\alpha_f=2\pi-Y_f,
$$
which causes a contradiction.

By above claim, we have
$$
\lim_{m\to +\infty}L_{i,f}^m=\lim_{m\to +\infty}\frac{k_i\theta_i^m}{\sqrt{k_i^2-1}}=\pi,~~\forall i\in I.
$$
Hence we obtain
$$
\lim_{m\to +\infty }\sum_{i\in I} L_{i,f}^m=|I|\pi.
$$

If $n-2+\frac{Y_f}{\pi}\le|I|\le n$, by Lemma \ref{gauss}, we have 
\begin{equation}\label{aq1}
\sum_{i\in I}L_{i,f}^m\le\sum_{i=1}^nL_{i,f}^m=(n-2)\pi+Y_f-\text{Area}(\Omega_f^m)<(n-2)\pi+Y_f,~~\forall m\ge 1,
\end{equation}
where $\Omega_f^m$ is the region with conical angle $\alpha_{f}=2\pi-Y_f$ enclosed by sub-arcs $C_{i,f}^m, i=1,\cdots,n$. We claim that
\begin{equation}\label{aq2}
\liminf_{m\to+\infty}\sum_{i\in I}L_{i,f}^m=(n-2)\pi+Y_f.
\end{equation}
If not, there exists a subsequence $\{k^{m_j}\}_{j=1}^{+\infty}$ of $\{k^m\}_{m=1}^{+\infty}$ such that 
\begin{equation}\label{as}
\lim_{j\to +\infty}\sum_{i\in I}L_{i,f}^{m_j}=\beta<(n-2)\pi+Y_f,
\end{equation}
where $\beta$ is a non-negative number. Then we claim that $k_f^{m_j}\to +\infty(j\to +\infty)$. If not, there exists a subsequence $\{k_f^{m_{j_s}}\}_{s=1}^{+\infty}$ of $\{k_f^{m_j}\}_{j=1}^{+\infty}$, which has an upper bound. By above argument, we have 
$$
\lim_{s\to +\infty}L_{i,f}^{m_{j_s}}=\pi,~~\forall i\in I.
$$
Then we obtain
$$
\lim_{s\to +\infty}\sum_{i\in I}L_{i,f}^{m_{j_s}}=|I|\pi\ge (n-2)\pi+Y_f,
$$
which causes a contradiction. 

By $r_f^{m_j}$ we denote the radius of circle $C_f^{m_j}$. Since $k_f^{m_j}\to +\infty(j\to +\infty)$, we know that $r_f^{m_j}\to 0$ and $\text{Area}(\Omega_f^{m_j})\to 0$ as $j\to +\infty$.

Moreover, we have 
$$
\lim_{j\to +\infty}L_{i,f}^{m_j}=\lim_{j\to +\infty}k_i^{m_j}\cdot l_{i,f}^{m_j}=0,~~\forall i\notin I,
$$
where $l_{i,f}^{m_j}$ is the length of sub-arc $C_{i,f}^{m_j}$.

Then by Lemma \ref{gauss}, we have
\begin{align*}
\lim_{j\to +\infty}\sum_{i\in I}L_{i,f}^{m_j}&=\lim_{j\to +\infty}\sum_{i\in I}L_{i,f}^{m_j}+\lim_{j\to +\infty}\sum_{i\notin I}L_{i,f}^{m_j}=\lim_{j\to +\infty}\sum_{i=1}^nL_{i,f}^{m_j}=\lim_{j\to +\infty}(n-2)\pi+Y_f-\text{Area}(\Omega_f^{m_j})\\
&=(n-2)\pi+Y_f,
\end{align*}
which contradicts (\ref{as}). Then by (\ref{aq1}) and (\ref{aq2}), we obtain
$$
\lim_{m\to +\infty}\sum_{i\in I}L_{i,f}^m=(n-2)\pi+Y_f.
$$
\end{proof}

\begin{corollary}\label{af}
For each $1\le i\le n$, we have $0<L_{i,f}<\pi$.
\end{corollary}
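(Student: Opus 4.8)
The lower bound is immediate: since $L_{i,f}=l_{i,f}\,k_i$ with $k_i>0$ and the sub-arc length $l_{i,f}>0$, we have $L_{i,f}>0$. The content of the statement is the strict upper bound $L_{i,f}<\pi$, and the plan is to obtain it as a consequence of the monotonicity recorded in Lemma \ref{area_change} together with the limit computation of Lemma \ref{limita2}.

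First I would upgrade Lemma \ref{area_change} to the full monotonicity of the single quantity $L_{i,f}$, viewed as a function of $(k_1,\dots,k_n)$ with the conical angle $\alpha_f=2\pi-Y_f$ held fixed. Lemma \ref{area_change} already gives $\partial L_{i,f}/\partial k_j<0$ for every $j\neq i$. For the diagonal direction I would write
\[
\frac{\partial L_{i,f}}{\partial k_i}=\frac{\partial\big(\sum_{m=1}^n L_{m,f}\big)}{\partial k_i}-\sum_{m\neq i}\frac{\partial L_{m,f}}{\partial k_i},
\]
and observe that the first term is positive by \eqref{dd}, while the subtracted sum is negative by the second assertion of Lemma \ref{area_change}; hence $\partial L_{i,f}/\partial k_i>0$. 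Thus $L_{i,f}$ is strictly increasing in $k_i$ and strictly decreasing in each $k_j$ with $j\neq i$.

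Next I would exploit this monotonicity to replace the pointwise bound by a boundary/limit value. On the open domain $(0,+\infty)^n$ the supremum of $L_{i,f}$ is, by the monotonicity just established, approached in the regime $k_i\to+\infty$ and $k_j\to 0^+$ for all $j\neq i$. This is precisely the limiting situation of Lemma \ref{limita2} with $I=\{i\}$, so the limit exists and equals $\pi\min\{1,\,n-2+Y_f/\pi\}$. Since $\min\{1,\,n-2+Y_f/\pi\}\le 1$, this supremum is at most $\pi$. Finally, because the domain is open and $L_{i,f}$ is strictly monotone in each coordinate, the supremum is not attained at any admissible configuration: for a given $k^0$ one can strictly increase the value by enlarging $k_i$ slightly, so $L_{i,f}(k^0)$ lies strictly below the supremum. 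This yields $L_{i,f}<\pi$.

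The only delicate point is this last step—justifying that the supremum over the open box coincides with the single joint limit as $k_i\to+\infty,\ k_j\to 0^+$. Because $L_{i,f}$ is monotone in each coordinate separately, the supremum equals the iterated limit, and Lemma \ref{limita2} guarantees the joint limit exists with the stated value; one need only note that Lemma \ref{limita2} is stated for an arbitrary limit vector $k^*$, and therefore applies verbatim to the mixed regime $k_i^*=+\infty$, $k_j^*=0$. I expect this monotonicity-to-limit reduction to be the main (though routine) obstacle, with the remaining estimates being immediate.
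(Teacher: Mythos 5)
Your proof is correct and follows essentially the same route as the paper: the positivity of $\partial L_{i,f}/\partial k_i$ (obtained from the same decomposition via \eqref{dd} and Lemma \ref{area_change}) combined with the limit value $\pi\min\{1,\,n-2+Y_f/\pi\}\le\pi$ from Lemma \ref{limita2}. The only difference is that you pass to the full corner $k_i\to+\infty$, $k_j\to 0^+$, which is harmless but unnecessary — monotonicity in $k_i$ alone, with the other curvatures held fixed, already yields $L_{i,f}<\pi$, which is exactly what the paper does.
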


\begin{proof}
By Lemma \ref{area_change} and Lemma \ref{limita2}, we have 
$$
\frac{\partial L_{i, f}}{\partial k_i}>0, \lim _{k_i \rightarrow+\infty} L_{i, f}=\pi, \quad \forall 1 \leq i \leq n.
$$
This completes the proof.
\end{proof}

\subsection{The proof of Theorem \ref{main1}}The aim of this section is to obtain Theorem \ref{main1} by the  variational principle. We can define the potential function
\begin{equation}\label{W}
\mathcal{E}(s)=\sum_{f\in F} \mathcal{E}_{f}(s)\end{equation}
on $\mathbb{R}^{\vert V\vert}$, where $s_i$ is the value of the generalized circle packing metric at $i\in V$. We know that $\mathcal{E}_{f}(s)$ is strictly convex by Lemma \ref{convex}, hence $\mathcal{E}(s)$ is strictly convex. Note that
\[\frac{\partial \mathcal{E}(s)}{\partial s_i}=\sum_{f\in F_{\{i\}}}L_{i,f}\overset{\Delta}{=}L_i.\]
It is easy to see that $\nabla\mathcal{E} =(L_1,\cdots,L_{\vert V\vert})^T$, hence the Hessian of $\mathcal{E}$ equals to a Jacobi matrix, i.e.
\[\text{Hess}~\mathcal{E}=M=\begin{pmatrix}
 	 		\frac{\partial L_1}{\partial s_1}&\cdots&\frac{\partial L_{1}}{\partial s_{\vert V\vert}}\\
 	 		\vdots&\ddots&\vdots\\
 	 		\frac{\partial L_{\vert V\vert}}{\partial s_{1}}&\cdots&\frac{\partial L_{\vert V\vert}}{\partial s_{\vert V\vert}}\\
\end{pmatrix}.\]

\begin{proposition}\label{pro 3.1}
    The Jacobi matrix $M$ is positive definite.
\end{proposition}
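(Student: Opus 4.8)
The plan is to exploit the additive structure $\mathcal{E}(s)=\sum_{f\in F}\mathcal{E}_f(s)$ together with the fact, established in Lemma \ref{convex}, that each face potential $\mathcal{E}_f$ is strictly convex in its own variables. Since $\mathcal{E}_f(s)$ depends only on the coordinates $\{s_i: i\in V_f\}$ attached to the vertices of $f$, its Hessian $\text{Hess}~\mathcal{E}_f$, regarded as a $|V|\times|V|$ matrix, vanishes outside the diagonal block indexed by $V_f$. Writing $H_f$ for this extended-by-zero matrix, we obtain the exact decomposition $M=\text{Hess}~\mathcal{E}=\sum_{f\in F}H_f$.

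First I would fix an arbitrary $x=(x_1,\dots,x_{|V|})^T\in\mathbb{R}^{|V|}$ and denote by $x|_{V_f}$ its restriction to the coordinates in $V_f$. Because $H_f$ is supported on the $V_f$-block,
\[
x^T M x=\sum_{f\in F}x^T H_f x=\sum_{f\in F}\big(x|_{V_f}\big)^T\big(\text{Hess}~\mathcal{E}_f\big)\big(x|_{V_f}\big).
\]
By Lemma \ref{convex}, each $\text{Hess}~\mathcal{E}_f$ is positive definite (indeed a symmetric, strictly diagonally dominant matrix with positive diagonal entries), so every summand on the right is nonnegative, and the summand associated to $f$ vanishes if and only if $x|_{V_f}=0$.

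It remains to upgrade nonnegativity to strict positivity. Suppose $x^T M x=0$; then each summand vanishes, forcing $x|_{V_f}=0$, i.e.\ $x_i=0$ for every vertex $i$ of every face $f$. Since $(S,\Sigma)$ is a polygonal cellular decomposition, every vertex of $V$ is a vertex of at least one face, so this yields $x=0$. Hence $x^T M x>0$ for all $x\neq 0$, and $M$ is positive definite. The only point requiring care is this final covering step — that the vertex–face incidences of $\Sigma$ exhaust $V$ — which is precisely what upgrades the sum of the block-positive-definite matrices $H_f$ from merely positive semidefinite to positive definite on all of $\mathbb{R}^{|V|}$.
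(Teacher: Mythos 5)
Your proof is correct, but it takes a different route from the paper's. You exploit the block decomposition $M=\sum_{f\in F}H_f$, where each $H_f$ is the Hessian of the face potential $\mathcal{E}_f$ extended by zero outside the $V_f$-block, and then invoke the per-face positive definiteness already established in Lemma \ref{convex}; the only additional input is that every vertex of $\Sigma$ lies on some face, which is automatic for a polygonal cellular decomposition of a surface. The paper instead works directly with the global matrix $M$: it re-derives the sign conditions $\partial L_{i,f}/\partial s_i>0$ and $\partial L_{j,f}/\partial s_i<0$ from Lemmas \ref{gauss} and \ref{area_change}, sums them over faces, and identifies the row excess $\bigl|\partial L_i/\partial s_i\bigr|-\sum_{j\neq i}\bigl|\partial L_j/\partial s_i\bigr|$ with $-\sum_{f}\partial\operatorname{Area}(\Omega_f)/\partial s_i>0$, concluding that $M$ itself is symmetric, strictly diagonally dominant with positive diagonal. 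Both arguments rest on the same underlying sign facts, but yours is more modular and shorter once Lemma \ref{convex} is granted, essentially reducing positive definiteness of a sum of positive semidefinite blocks to a covering statement about vertex--face incidences; the paper's version has the advantage of exhibiting explicit diagonal dominance of $M$ directly, which gives quantitative control on its entries. Either way the conclusion holds; just make sure you note (as you do) that $M$ is symmetric, being the Hessian of the $C^2$ function $\mathcal{E}$, so that ``positive definite'' has its usual meaning.
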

\begin{proof}
 By Lemma \ref{gauss} and Lemma \ref{area_change}, we obtain
    $$
    \frac{\partial L_{i,f}}{\partial s_i}=-\frac{\partial\left(\operatorname{Area}\left(\Omega_f\right)+\sum_{j \neq i} L_{j,f}\right)}{\partial s_i}>0,~~\forall i\in V_f, \forall f\in F,
$$
and
$$
\frac{\partial L_{j,f}}{\partial s_i}< 0,~~\forall i\ne j\in V_f, \forall f\in F.
$$
Hence we have 
$$
\frac{\partial L_i}{\partial s_i}=\frac{\partial(\sum_{f\in F_{\{i\}}} L_{i,f})}{\partial s_i}=\sum_{f\in F_{\{i\}}}\frac{\partial L_{i,f}}{\partial s_i}>0,~~\forall i\in V.
$$
and
$$
\frac{\partial L_j}{\partial s_i}=\frac{\partial(\sum_{f\in F_{\{j\}}} L_{j,f})}{\partial s_i}=\sum_{f\in F_{\{j\}}}\frac{\partial L_{j,f}}{\partial s_i}\le 0,~~\forall i\ne j\in V.
$$
Hence we obtain

$$
\begin{aligned}
\left|\frac{\partial L_i}{\partial s_i}\right|-\sum_{j\ne i}\left|\frac{\partial L_j}{\partial s_i}\right|=\frac{\partial L_i}{\partial s_i}+\sum_{j\ne i}\frac{\partial L_j}{\partial s_i}=-\sum_{f\in F}\frac{\partial\operatorname{Area}(\Omega_f)}{\partial s_i}>0.
\end{aligned}
$$
Hence the Jacobi matrix $M$ is a strictly diagonally dominant matrix with positive diagonal entries, i.e. $M$ is positive definite.
\end{proof}

Now by Proposition \ref{pro 3.1}, the Hess $\mathcal{E}$ is positive definite, hence we have the following corollary.
\begin{corollary}\label{coro 3.2}
    The potential function $\mathcal{E}$ is strictly convex on $\mathbb{R}^{|V|}$.
\end{corollary}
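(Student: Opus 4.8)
The plan is to deduce strict convexity directly from the positive definiteness of the Hessian established in Proposition \ref{pro 3.1}, which carries all the analytic weight. First I would record that $\mathcal{E}$ is of class $C^2$ on $\mathbb{R}^{|V|}$: each total geodesic curvature $L_{i,f}$ is a smooth function of $(k_1,\dots,k_n)$ by Lemma \ref{packing} and the discussion following it, and since $s_i=\ln k_i$ is a smooth diffeomorphism $\mathbb{R}_{>0}^{|V|}\to\mathbb{R}^{|V|}$, the gradient $\nabla\mathcal{E}=(L_1,\dots,L_{|V|})^T$ depends smoothly on $s$. Hence the entries of $\text{Hess}~\mathcal{E}=M$ are continuous and $M$ is well defined at every point.

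Next I would invoke Proposition \ref{pro 3.1}, which shows that at each $s\in\mathbb{R}^{|V|}$ the matrix $M=\text{Hess}~\mathcal{E}(s)$ is symmetric, strictly diagonally dominant, and has positive diagonal entries, hence positive definite. The remaining task is purely to upgrade this pointwise positive definiteness to global strict convexity, for which I would use the standard line-restriction argument. Fix distinct $x,y\in\mathbb{R}^{|V|}$ and set $g(t)=\mathcal{E}\big(x+t(y-x)\big)$ for $t\in[0,1]$; then $g$ is $C^2$ with
\[
g''(t)=(y-x)^T\,\text{Hess}~\mathcal{E}\big(x+t(y-x)\big)\,(y-x)>0,
\]
since $y-x\neq 0$ and the Hessian is positive definite. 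Thus $g$ is strictly convex on $[0,1]$, so $g(t)<(1-t)g(0)+tg(1)$ for $t\in(0,1)$, i.e. $\mathcal{E}\big((1-t)x+ty\big)<(1-t)\mathcal{E}(x)+t\mathcal{E}(y)$. As $x,y$ are arbitrary and $\mathbb{R}^{|V|}$ is convex, $\mathcal{E}$ is strictly convex.

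There is essentially no obstacle here beyond Proposition \ref{pro 3.1}; the only point requiring mild care is that it is \emph{strict} positive definiteness at every point—not mere semidefiniteness—that produces the strict inequality $g''>0$ and hence genuine strict convexity. I would note in passing that summing the strict convexity of the individual $\mathcal{E}_f$ from Lemma \ref{convex} is \emph{not} by itself sufficient, since each $\mathcal{E}_f$, viewed as a function on all of $\mathbb{R}^{|V|}$, is constant in the directions indexed by vertices not incident to $f$ and so is only convex rather than strictly convex; it is precisely the diagonal-dominance estimate of Proposition \ref{pro 3.1}, which combines the contributions of all faces through $\partial\area(\Omega_f)/\partial s_i$, that restores strict definiteness in every direction.
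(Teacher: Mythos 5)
Your proof is correct and follows the same route as the paper, which deduces strict convexity directly from the positive definiteness of $\text{Hess}~\mathcal{E}$ established in Proposition \ref{pro 3.1}; you merely spell out the standard line-restriction argument that the paper leaves implicit. Your closing remark is also well taken: the paper's earlier sentence ``$\mathcal{E}_f$ is strictly convex, hence $\mathcal{E}$ is strictly convex'' is indeed imprecise as stated, since each $\mathcal{E}_f$ is degenerate in the directions of vertices not incident to $f$, and it is exactly the diagonal-dominance estimate of Proposition \ref{pro 3.1} that repairs this.
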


The following property of convex functions plays a key role  in the proof of Theorem \ref{main1}.

\begin{lemma}\cite{Dai}\label{injective}
Let $\mathcal{E}:\mathbb{R}^n\to\mathbb{R}$ be a $C^2$-smooth strictly convex function with the positive definite Hessian matrix. Then its gradient $\nabla \mathcal{E}:\mathbb{R}^n\to\mathbb{R}^n$ is a smooth embedding.
\end{lemma}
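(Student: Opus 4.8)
The plan is to verify the three defining properties of a smooth embedding in turn: that $\nabla\mathcal{E}$ is an immersion, that it is globally injective, and that it is a homeomorphism onto its image. The local statements are immediate from the inverse function theorem, while injectivity is where convexity must be used globally.

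First I would observe that since $\mathcal{E}$ is $C^2$, the map $\nabla\mathcal{E}$ is $C^1$ and its Jacobian at each point $x$ is exactly $\text{Hess}~\mathcal{E}(x)$. By hypothesis this Hessian is positive definite, hence invertible, at every point of $\mathbb{R}^n$. The inverse function theorem therefore shows that $\nabla\mathcal{E}$ is a local diffeomorphism; in particular its differential is everywhere an isomorphism, so $\nabla\mathcal{E}$ is an immersion.

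The heart of the argument, and the step I expect to be the main obstacle, is global injectivity, which does not follow from the local statement alone and genuinely uses convexity along the whole segment joining two points. The plan is to establish strict monotonicity of the gradient: for $x\neq y$ I would set $g(t)=\langle\nabla\mathcal{E}(y+t(x-y)),\,x-y\rangle$ for $t\in[0,1]$, so that $g'(t)=\langle\text{Hess}~\mathcal{E}(y+t(x-y))(x-y),\,x-y\rangle>0$ because the Hessian is positive definite and $x-y\neq 0$. Thus $g$ is strictly increasing, giving $g(1)>g(0)$, i.e.
\[
\langle\nabla\mathcal{E}(x)-\nabla\mathcal{E}(y),\,x-y\rangle>0\qquad\text{for all }x\neq y.
\]
In particular, $\nabla\mathcal{E}(x)=\nabla\mathcal{E}(y)$ forces the left-hand side to vanish, hence $x=y$, proving injectivity.

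Finally I would combine the two facts. A local diffeomorphism is an open map, so $\nabla\mathcal{E}(\mathbb{R}^n)$ is open in $\mathbb{R}^n$ and $\nabla\mathcal{E}$ is a continuous open injection onto its image, hence a homeomorphism onto that image; together with the local diffeomorphism property this makes $\nabla\mathcal{E}$ a diffeomorphism onto an open subset of $\mathbb{R}^n$, which is in particular a smooth embedding. The only delicate point beyond the monotonicity computation is noting that injectivity together with the local diffeomorphism property upgrades automatically to a homeomorphism onto the image, so no separate properness or coercivity argument is needed.
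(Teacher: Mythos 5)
Your proof is correct and is essentially the standard argument: the paper itself gives no proof of this lemma, citing it directly from the reference [Dai] (Dai--Gu--Luo, \emph{Variational principles for discrete surfaces}), where the same reasoning appears — strict monotonicity of the gradient along segments for injectivity, the inverse function theorem for the local diffeomorphism property, and the observation that an injective local diffeomorphism is a diffeomorphism onto its open image. No gaps; in particular you correctly note that no properness argument is needed for the embedding conclusion.
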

Now we can proof the Theorem \ref{main1} as follows.
\begin{proof}[\textbf{Proof of Theorem \ref{main1}}] Let $\mathcal{E}$ be defined as (\ref{W}). We only need to proof the fuction $\nabla \mathcal{E}:\mathbb{R}^{\vert V\vert}\to \mathcal{L}$ is a homeomorphism.
For each $ s\in\mathbb{R}^{|V|}$, by the previous conclusion, we have 
$$
\nabla\mathcal{E}(s)=L(s)\overset{\Delta}{=}L.
$$
Then for each nonempty subset $W\subset V$, we have
$$
\sum_{w\in W}L_w=\sum_{w\in W}\sum_{f\in F_{\{w\}}}L_{w,f}=\sum_{f\in F_W}\sum_{w\in W\cap V_f}L_{w,f}.
$$
By Lemma \ref{gauss}, we have 
$$
\sum_{w\in W}L_w=\sum_{f\in F_W}\sum_{w\in W\cap V_f}L_{w,f}<\sum_{f\in F_W}(N(f)-2)\pi+Y_f.
$$
Besides, by Corollary \ref{af}, we have
$$
\sum_{w\in W}L_w=\sum_{f\in F_W}\sum_{w\in W\cap V_f}L_{w,f}<\sum_{f\in F_W}N(f,W)\pi.
$$
Then we obtain 
$$
\sum_{w\in W}L_w<\sum_{f\in F_W}\pi  \min\bigg\{N(f,W),N(f)-2+\frac{Y_{f}}{\pi}\bigg\},
$$
i.e. $L\in\mathcal{L}$. Hence $\nabla\mathcal{E}$ is a map from $\mathbb{R}^{|V|}$ to $\mathcal{L}$.

By Lemma \ref{injective}, we know that $\nabla\mathcal{E}$ is  a smooth embedding, hence we only need to prove that $\nabla \mathcal{E}(\mathbb{R^{|V|}})$ is equal to $\mathcal{L}$.
Since $\nabla \mathcal{E}$ is injective and its image is contained in $\mathcal{L},$
by Brouwer's Theorem on the Invariance of Domain, we need to analysis the boundary of its image. Choose a sequence $\{s^m\}_{m=1}^{+\infty}\subset \mathbb{R}^{|V|}$ such that
\[\lim_{m\to +\infty} s^{m}=a\in[-\infty,+\infty]^{\vert V\vert},\]
where $a_i=-\infty$ or $+\infty$ for some $i\in V$. Besides, we have 
$$
\nabla \mathcal{E}(s^m)=L(s^m)\overset{\Delta}{=}L^m,~~k^m_i\overset{\Delta}{=}\exp(s^m_i),~~\forall m\ge 1.
$$
Now we need to prove that $\{L^m\}^{+\infty}_{m=1}$ converges to the boundary of  $\mathcal{L}$. 

We define two subset $W, W'\subset V$, i.e.
$$
W=\{i\in V~|~a_i=+\infty\},~~W'=\{i\in V~|~a_i=-\infty\},
$$
where $W\ne \emptyset$ or $W'\ne \emptyset$. 

We first consider $W\ne \emptyset$. Then for each $i\in W$, $k^m_i=\exp(s^m_i)\to +\infty(m\to +\infty).$ Hence for each $f\in F_W$, by Lemma \ref{limita2}, we have
$$
 \lim_{m\to +\infty }\sum_{i\in W\cap V_f }L_{i,f}^m=\pi\min\bigg\{N(f,W), N(f)-2+\frac{Y_{f}}{\pi}\bigg\}.
 $$
Hence we have
$$
\lim_{m\to +\infty }\sum_{i\in W}L^m_{i}=\lim_{m\to +\infty }\sum_{f\in F_W}\sum_{i\in W\cap V_f}L_{i,f}^m=\sum_{f\in F_W}\pi\min\bigg\{N(f,W), N(f)-2+\frac{Y_{f}}{\pi}\bigg\},
$$
i.e. $\{L^m\}^{+\infty}_{m=1}$ converges to the boundary of  $\mathcal{L}$.

Now we consider $W'\ne \emptyset$. For each $ i\in W'$, $k^m_i=\exp(s^m_i)\to 0(m\to +\infty).$ Hence for each $f\in F_{W'}$, by Lemma \ref{limit01}, we have
$$\lim_{m\to +\infty}L_{i,f}^m=0,~~\forall i\in W'\cap V_f.$$
Now we obtain
$$
0\le\lim_{m\to +\infty }\sum_{i\in W'}L^m_{i}=\lim_{m\to +\infty }\sum_{f\in F_{W'}}\sum_{i\in W'\cap V_f}L_{i,f}^m=0,
$$
i.e. $\lim_{m\to +\infty }L^m_{i}=0, \forall i\in W'.$ Hence $\{L^m\}^{+\infty}_{m=1}$ converges to the boundary of  $\mathcal{L}$. 
We completed the proof.
\end{proof}

\section{Long time existence of combinatorial p-th Calabi flows}\label{sec4}
We use the change of variables $s_i=\ln k_i, \forall i\in V$, then the combinatorial p-th Calabi flow (\ref{p-calabi_flow}) has the following equivalent form, i.e.
\begin{definition}[Combinatorial p-th Calabi flow]
\begin{align}
\frac{d s_i}{dt}=(\Delta_p -K_i)(L_i-{\hat{L}}_i),~~\forall i\in V.\label{equi p-calabi_flow}
\end{align}
\end{definition}

Since all $(\Delta_p -K_i)(L_i-{\hat{L}}_i)$ are continuous function on $\mathbb{R}^{|V|}$, the flow (\ref{equi p-calabi_flow}) exists in some interval $[0,\varepsilon)$ for any initial value by Peano's existence theorem in classical ODE theory.
Next, we show that the solution of (\ref{equi p-calabi_flow}) exists for all time.

\begin{theorem}\label{Prior estimate}
For any initial value $s(0)\in \mathbb{R}^{|V|}$, the solution of combinatorial p-th Calabi flow \eqref{equi p-calabi_flow} exists for all time $t\in [0,+\infty).$
\end{theorem}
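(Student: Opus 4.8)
The plan is to establish global existence by proving that the velocity field defining \eqref{equi p-calabi_flow} is \emph{uniformly bounded} on all of $\mathbb{R}^{|V|}$; once this is in hand, finite-time blow-up is impossible. Peano's theorem already produces a solution on a maximal interval $[0,T_{\max})$, and the standard extension theorem for continuous vector fields guarantees that, if $T_{\max}<+\infty$, then $s(t)$ must leave every compact subset of $\mathbb{R}^{|V|}$ as $t\to T_{\max}^-$. A bound $\bigl|\tfrac{ds_i}{dt}\bigr|\le C$ holding at every point $s$ forces $\|s(t)-s(0)\|_\infty\le Ct$, so $s(t)$ stays inside a fixed ball on $[0,T_{\max}]$; this contradiction yields $T_{\max}=+\infty$. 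Thus the whole argument reduces to a uniform a priori estimate on the right-hand side of \eqref{equi p-calabi_flow}, and notably no uniqueness (hence no Lipschitz regularity) is required.

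First I would control the zeroth-order factor. Writing $g_i:=L_i-\hat L_i$, Corollary \ref{af} gives $0<L_i=\sum_{f\in F_{\{i\}}}L_{i,f}<|F_{\{i\}}|\pi$, whence $|g_i|\le |F_{\{i\}}|\pi+|\hat L_i|=:C_0$ independently of $s$. Since $p>1$, the factor occurring in $\Delta_p g_i$ can be rewritten as $|g_j-g_i|^{p-2}(g_j-g_i)=\operatorname{sgn}(g_j-g_i)\,|g_j-g_i|^{p-1}$, which is continuous even when $1<p<2$ and is bounded by $(2C_0)^{p-1}$; in particular the apparent singularity at $g_j=g_i$ is harmless.

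Next I would bound the coefficients $A_{ij}$ and $K_i$, which is where the geometry of Section \ref{sec2} enters. For $A_{ij}$, only faces containing the edge $ij$ contribute, and estimate \eqref{bound} together with $k_j\cdot\tfrac{2(k_f^2-1)}{k_f(k_f^2+k_j^2-1)}\le \tfrac{\sqrt{k_f^2-1}}{k_f}<1$ (obtained by maximizing the rational function of $k_j$ at $k_j=\sqrt{k_f^2-1}$) shows each per-face term is $<1$, so $A_{ij}<\#\{f:\ i,j\in V_f\}$, a purely combinatorial constant. For $K_i$, I would use Lemma \ref{gauss} to write $K_i=\partial_{s_i}\sum_{f\in F_{\{i\}}}\sum_{v\in V_f}L_{v,f}$ and then the explicit formula \eqref{kiestimate} to bound each contribution $k_i\,\partial_{k_i}\!\bigl(\sum_v L_{v,f}\bigr)$; after the substitution $k_i=\sqrt{u^2+1}$ in the range $k_i>1$ (respectively $k_i=\sqrt{1-a^2}$ in the range $0<k_i<1$) these expressions are seen to be uniformly bounded over all admissible $(k_i,k_f)$ with $k_f>1$, so $0\le K_i\le C_1$.

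Combining the three estimates gives $\bigl|\tfrac{ds_i}{dt}\bigr|\le \sum_{j\sim i}A_{ij}(2C_0)^{p-1}+C_0\,K_i\le C$ for a constant $C$ depending only on $(S,\Sigma)$, $\hat L$ and $p$, which completes the proof via the extension argument above. The main obstacle is the uniform control of $K_i$: in the hypercycle--horocycle transition $k_i\to1^-$ together with $k_f\to1^+$, formula \eqref{kiestimate} contains an $\operatorname{arctanh}$ term that diverges, and one must verify that it is dominated by the vanishing factor $\sqrt{1-k_i^2}$ (a short estimate of the type $\sqrt{\delta}\,\log(1/\delta)\to0$), so that $K_i$ stays bounded up to the degenerate boundary of the parameter domain.
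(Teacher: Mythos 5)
Your proposal is correct and follows essentially the same route as the paper: both reduce global existence to a uniform a priori bound on the right-hand side of \eqref{equi p-calabi_flow}, obtained from Corollary \ref{af} for $|L_i-\hat L_i|$, from \eqref{bound} giving each per-face contribution to $A_{ij}$ the bound $\sqrt{k_f^2-1}/k_f<1$, and from a case analysis of \eqref{kiestimate} for $K_i$, after which the ODE extension theorem (with Peano for local existence, since the field need not be Lipschitz for $1<p<2$) finishes the argument. One small correction to your final paragraph: the $\arctanh$ term in \eqref{kiestimate} diverges when its argument $\sqrt{1-k_i^2}/k_f$ tends to $1^-$, i.e.\ in the regime $k_i\to 0^+$ together with $k_f\to 1^+$ (not $k_i\to 1^-$, where the argument tends to $0$), and there it is killed by the vanishing prefactor $2k_i$ rather than by $\sqrt{1-k_i^2}$ --- this is precisely the paper's bound $M_2=\sup_{0<x<1/2}x\arctanh\sqrt{1-x^2}<\infty$.
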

\begin{proof}
Using the inequality \eqref{bound}, then we have 
\begin{align*}
    |A_{ij}|&=\sum_{f\in F_{\{i,j\}}}-\pp{L_{i,f}}{k_j}k_j \\
    &\le\sum_{f\in F_{\{i,j\}}}\frac{2(k_f^2-1)k_j}{k_f(k_f^2-1+k_j^2)}\\
    &\le\sum_{f\in F_{\{i,j\}}} 2\frac{\frac{k_j}{\sqrt{k_f^2-1}}}{1+\frac{k_j^2}{k_f^2-1}}\\&\le|F|.
\end{align*}
Now we consider the coefficient
$$
K_i=-k_i\sum_{f\in F{\{i\}}}\frac{\partial }{\partial k_i}\mathrm{Area}(\Omega_f).
$$
Fixed a polygon $f\in F_{\{i\}}$, by Lemma \ref{gauss} and \eqref{kiestimate}, we have 
$$0<-k_i\frac{\partial\mathrm{Area}(\Omega_f)}{\partial k_i}=\left\{
\begin{array}{lc}
2k_i(k_i^2-1)^{-\frac{3}{2}}(\frac{\sqrt{k_i^2-1}}{k_f}-\arctan{\frac{\sqrt{k_i^2-1}}{k_f}})&k_i>1, \\\frac{2}{3k_f^3}&k_i=1,\\ 2k_i(1-k_i^2)^{-\frac{3}{2}}(\arctanh{\frac{\sqrt{1-k_i^2}}{k_f}}-{\frac{\sqrt{1-k_i^2}}{k_f}})&0<k_i<1.
\end{array}
\right.
$$
It is easy to know that $0<K_i\le |F|$ when $k_i=1$. Now assume that $1<k_i\le 2.$ Let $M_1=\sup_{x>0}\frac{x-\arctan{x}}{x^3}$, then we have
$$
\begin{aligned}
-k_i\frac{\partial\mathrm{Area}(\Omega_f)}{\partial k_i}&=2k_i(k_i^2-1)^{-\frac{3}{2}}(\frac{\sqrt{k_i^2-1}}{k_f}-\arctan{\frac{\sqrt{k_i^2-1}}{k_f}})\\
&\le 2k_i(\frac{k_f}{\sqrt{k_i^2-1}})^3(\frac{\sqrt{k_i^2-1}}{k_f}-\arctan{\frac{\sqrt{k_i^2-1}}{k_f}})\\
&\le 4M_1.
\end{aligned}
$$
Hence we obtain
$$
0<K_i\le 4M_1|F|.
$$
Assume that $k_i>2,$ then we have
$$
\begin{aligned}-k_i\frac{\partial\mathrm{Area}(\Omega_f)}{\partial k_i}&=2k_i(k_i^2-1)^{-\frac{3}{2}}(\frac{\sqrt{k_i^2-1}}{k_f}-\arctan{\frac{\sqrt{k_i^2-1}}{k_f}})\\
&\le\frac{2k_i}{(k_i^2-1)}\frac{k_f}{\sqrt{k_i^2-1}}(\frac{\sqrt{k_i^2-1}}{k_f}-\arctan{\frac{\sqrt{k_i^2-1}}{k_f}})\\
&\le \frac{2k_i}{(k_i^2-1)}\\
&\le 2.
\end{aligned}
$$
Hence we obtain
$$
0<K_i\le 2|F|.
$$
Assume that $0<k_i<\frac{1}{2}$. Let $M_2=\sup_{0<x<\frac{1}{2}}x\arctanh{\sqrt{1-x^2}}$, then we have
$$
\begin{aligned}
-k_i\frac{\partial\mathrm{Area}(\Omega_f)}{\partial k_i}&=2k_i(1-k_i^2)^{-\frac{3}{2}}(\arctanh{\frac{\sqrt{1-k_i^2}}{k_f}}-{\frac{\sqrt{1-k_i^2}}{k_f}})\\
&\le 2k_i(1-k_i^2)^{-\frac{3}{2}}\arctanh{\frac{\sqrt{1-k_i^2}}{k_f}}\\
&\le 3k_i\arctanh{\frac{\sqrt{1-k_i^2}}{k_f}}\\
&\le 3k_i\arctanh{\sqrt{1-k_i^2}}\\
&\le 3M_2.
\end{aligned}
$$
Hence we obtain
$$
0<K_i\le 3M_2|F|.
$$
Assume that $\frac{1}{2}\le k_i<1$. Let $M_3=\sup_{0< x\le\frac{\sqrt{3}}{2}}\frac{\arctanh x-x}{x^3},$ then we have
$$
\begin{aligned}
-k_i\frac{\partial\mathrm{Area}(\Omega_f)}{\partial k_i}&=2k_i(1-k_i^2)^{-\frac{3}{2}}(\arctanh{\frac{\sqrt{1-k_i^2}}{k_f}}-{\frac{\sqrt{1-k_i^2}}{k_f}})\\
&\le 2k_i(\frac{k_f}{\sqrt{1-k_i^2}})^3(\arctanh{\frac{\sqrt{1-k_i^2}}{k_f}}-{\frac{\sqrt{1-k_i^2}}{k_f}})\\
&\le 2M_3.
\end{aligned}
$$
Hence we obtain
$$
0<K_i\le 2M_3|F|.
$$
Now we know that there exists a constant $\mu$, depending only on the polygonal cellular decomposition of $S$, such that
$$
|A_{ij}|,|K_i|\le \mu,~~\forall i\in V, ~\forall j\sim i.
$$
Then by the definition of $\Delta_p$ and Corollary \ref{af}, we have 
$$
|(\Delta_p -K_i)(L_i-{\hat{L}}_i)|\le \mu |E|(\pi |F|+\|\hat{L}\|_{\infty})^{p-1}+\mu (\pi |F|+\|\hat{L}\|_{\infty}), ~~\forall i\in V.
$$
Hence all $|(\Delta_p -K_i)(L_i-{\hat{L}}_i)|$ are uniformly bounded by a constant. Then by the extension theorem of solutions in ODE theory, the solution exists for all time $t\ge 0$.
\end{proof}

\section{The proof of main theorem}\label{sec5}
Now we suppose that $|V|=N$ for simplicity. For a given vector $\hat{L}=({\hat{L}}_1,\cdots,{\hat{L}}_N)^T\in \mathcal{L}$, we can define a 1-form, i.e.
$$
\omega=\sum_{i=1}^{N}(L_i-{\hat{L}}_i)\mathrm{~d} s_i,
$$
it is easy to see that $\omega$ is closed. 

By Theorem \ref{main1}, there exists $\hat{s}\in \mathbb{R}^N$ which satisfies $L(\hat{s})=\hat{L}$, then we can define the potential function, i.e.
    $$
 \Theta(s)=\int_{\hat{s}}^s \omega.
$$
The function $\Theta$ is well-defined and does not depend on the particular choice of a piecewise smooth arc in $\mathbb{R}^N$ from the initial point $\hat{s}$ to $s$.

It is easy to see that $\nabla\Theta=L-\hat{L}=(L_1-{\hat{L}}_1,\cdots,L_N-{\hat{L}}_N)^T$, hence Hess $\Theta=M$. By proposition \ref{pro 3.1}, $M$ is positive definite, we know that $\Theta$ is strictly convex. 
\begin{proposition}\label{pro 3.3}
     $\hat{s}$ is the unique critical point of the potential function $\Theta$.
\end{proposition}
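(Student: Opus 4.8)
The plan is to exploit the strict convexity of $\Theta$ that has just been established. Recall that we have shown $\nabla\Theta = L - \hat{L}$ and $\operatorname{Hess}\Theta = M$, where by Proposition \ref{pro 3.1} the matrix $M$ is positive definite on all of $\mathbb{R}^N$. By the very definition of $\Theta$ as $\int_{\hat{s}}^s\omega$, we have that $\nabla\Theta(\hat{s}) = L(\hat{s})-\hat{L} = 0$, so $\hat{s}$ is certainly \emph{a} critical point; the content of the proposition is the \emph{uniqueness}.

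First I would observe that critical points of $\Theta$ are precisely the zeros of $\nabla\Theta$, i.e. the points $s$ with $L(s)=\hat{L}$. The cleanest route to uniqueness is to invoke Lemma \ref{injective}: since $\Theta$ is $C^2$-smooth and strictly convex with positive definite Hessian $M$, its gradient $\nabla\Theta:\mathbb{R}^N\to\mathbb{R}^N$ is a smooth embedding, in particular \emph{injective}. Hence the equation $\nabla\Theta(s)=0$ can have at most one solution, and since $\hat{s}$ is one such solution, it is the unique critical point. Alternatively, one can argue directly: if $s^1\neq s^2$ were two distinct critical points, then along the segment $\gamma(t)=s^1+t(s^2-s^1)$ the function $\phi(t)=\langle \nabla\Theta(\gamma(t)),\, s^2-s^1\rangle$ satisfies $\phi'(t)=(s^2-s^1)^T M(\gamma(t))\,(s^2-s^1)>0$ by positive definiteness of $M$, so $\phi$ is strictly increasing; but $\phi(0)=\phi(1)=0$ since both endpoints are critical, a contradiction.

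I expect there to be essentially no serious obstacle here, since all the hard analytic work — the strict convexity of $\mathcal{E}$ and the positive definiteness of $M$ — has already been carried out in Section \ref{sec3} and reused for $\Theta$. The only point requiring mild care is to make sure we are applying Lemma \ref{injective} to $\Theta$ rather than to $\mathcal{E}$; this is legitimate precisely because $\Theta$ and $\mathcal{E}$ differ by the affine function $\sum_i(\hat{L}_i s_i)$ plus a constant, so they share the same Hessian $M$, and the strict convexity and positive definiteness transfer verbatim. With injectivity of $\nabla\Theta$ in hand, the uniqueness of the critical point is immediate, completing the proof.
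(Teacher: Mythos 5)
Your proposal is correct and follows essentially the same route as the paper: both establish that $\hat{s}$ is a critical point from $L(\hat{s})=\hat{L}$ and then deduce uniqueness from the injectivity of $\nabla\Theta$ via Lemma \ref{injective}. The alternative segment argument and the remark that $\Theta$ and $\mathcal{E}$ share the same Hessian are fine but not needed beyond what the paper already does.
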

\begin{proof}
   By Lemma \ref{injective}, the gradient map of $\Theta$
   $$
\begin{array}{cccc}
\nabla\Theta:\mathbb{R}^N &\longrightarrow &\mathbb{R}^N\\
s&\longmapsto & L-\hat{L}\\
\end{array}
$$
is injective. 
Besides, we have $L(\hat{s})=\hat{L}$, i.e. 
$$\nabla\Theta(\hat{s})=L(\hat{s})-\hat{L}=0,$$ 
hence $\hat{s}$ is a critical point of $\Theta$. We know that $\nabla\Theta$ is injective, $\hat{s}$ is the unique critical point of $\Theta$.
\end{proof}
\begin{proposition}\label{pro 2.4}
$K=diag\{K_1,\cdots,K_N\}$ is a positive definite matrix, where $K_i (1\le i\le N)$ is defined in \ref{Ki}.
\end{proposition}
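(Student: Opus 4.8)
The plan is to exploit the fact that $K=\mathrm{diag}\{K_1,\dots,K_N\}$ is diagonal, so that its positive definiteness is equivalent to the positivity of every diagonal entry $K_i$. First I would unfold the definition (\ref{Ki}) and write
$$
K_i=-k_i\sum_{f\in F_{\{i\}}}\frac{\partial\,\mathrm{Area}(\Omega_f)}{\partial k_i},
$$
then observe that for every face $f\in F_{\{i\}}$ the vertex $i$ is, by the definition of $F_{\{i\}}$, a vertex of $f$. Hence Lemma \ref{area_change} is directly applicable and yields $\partial\,\mathrm{Area}(\Omega_f)/\partial k_i<0$ for each such $f$.

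Since $k_i=\exp(s_i)>0$, every summand $-k_i\,\partial\,\mathrm{Area}(\Omega_f)/\partial k_i$ is then strictly positive, and therefore $K_i>0$ as a sum of strictly positive terms. Equivalently, one may read off positivity termwise from the explicit formulas (\ref{kiestimate}): in the three cases $k_i>1$, $k_i=1$, $0<k_i<1$ one has respectively $\tfrac{\sqrt{k_i^2-1}}{k_f}-\arctan\tfrac{\sqrt{k_i^2-1}}{k_f}>0$, the manifestly positive value $\tfrac{2}{3k_f^3}$, and $\arctanh\tfrac{\sqrt{1-k_i^2}}{k_f}-\tfrac{\sqrt{1-k_i^2}}{k_f}>0$, using $x-\arctan x>0$ and $\arctanh x-x>0$ on the relevant ranges.

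Finally I would invoke the elementary fact that a real diagonal matrix with strictly positive diagonal entries is positive definite, which concludes that $K$ is positive definite. I do not anticipate a genuine obstacle here: the entire content reduces to the sign of the area derivative, which has already been established in Lemma \ref{area_change} (itself a consequence of Lemma \ref{gauss} together with the monotonicity $\partial(\sum_i L_{i,f})/\partial k_j>0$). The only point meriting a line of care is confirming that the vertex $i$ indeed belongs to each face counted in $F_{\{i\}}$, so that the sign statement of Lemma \ref{area_change} may legitimately be applied coordinatewise.
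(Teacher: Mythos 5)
Your proposal is correct and follows exactly the paper's argument: the paper's proof is the one-line observation that positive definiteness follows from Lemma \ref{area_change}, which is precisely the sign statement $\partial\,\mathrm{Area}(\Omega_f)/\partial k_i<0$ that you combine with $k_i>0$ and the diagonality of $K$. You have simply written out the details the paper leaves implicit.
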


\begin{proof}
By  Lemma \ref{area_change}, it is easy to know that  $K$ is a positive definite matrix.
\end{proof}

\begin{proposition}\label{pro 4.4}
    For any $1\le i,j\le N$ and $i\ne j$, we have
    $$
    A_{ij}\ge 0,
    $$
    and $A_{ij}$ is symmetry, i.e. 
    $$A_{ij}=A_{ji}.$$
\end{proposition}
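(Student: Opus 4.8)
The plan is to treat the two claims separately, both reducing to facts already established at the face level in Section~\ref{sec2}. First I would unwind the definition
$$A_{ij}=-\frac{\partial\big(\sum_{f\in F_{\{i\}}}L_{i,f}\big)}{\partial k_j}\,k_j$$
and observe that only those faces $f$ containing \emph{both} $i$ and $j$ actually contribute: since $L_{i,f}$ depends solely on the geodesic curvatures at the vertices of $f$, the partial derivative $\partial L_{i,f}/\partial k_j$ vanishes unless $j$ is a vertex of $f$. Hence
$$A_{ij}=-k_j\sum_{f\in F_{\{i,j\}}}\frac{\partial L_{i,f}}{\partial k_j},$$
and in particular $A_{ij}=0$ whenever $i\not\sim j$ (the sum is empty).

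For the non-negativity I would invoke Lemma~\ref{area_change}, which gives $\partial L_{i,f}/\partial k_j<0$ for every $i\ne j$ and every face $f$ containing both. Since $k_j>0$, each summand contributes a strictly positive term to $A_{ij}$, so $A_{ij}\ge 0$, with equality precisely when $F_{\{i,j\}}=\emptyset$, i.e.\ when $i\not\sim j$.

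For the symmetry, the clean route is to pass to the logarithmic variables $s_j=\ln k_j$. Since $\partial/\partial s_j=k_j\,\partial/\partial k_j$, we have $A_{ij}=-\partial L_i/\partial s_j$, which is exactly the negative of the off-diagonal entry $M_{ij}$ of the Jacobi matrix $M=\mathrm{Hess}\,\mathcal{E}$ from Section~\ref{sec3}. Because $M$ is the Hessian of the $C^2$ potential $\mathcal{E}$ (equivalently, because each $\omega_f=\sum_i L_{i,f}\,\mathrm{d}s_i$ is closed by Lemma~\ref{closed}, whence the face-level identity $k_j\,\partial L_{i,f}/\partial k_j=k_i\,\partial L_{j,f}/\partial k_i$ holds), the matrix $M$ is symmetric, and therefore $A_{ij}=-M_{ij}=-M_{ji}=A_{ji}$.

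I do not expect a genuine obstacle here: both assertions follow immediately from the face-level monotonicity (Lemma~\ref{area_change}) and the face-level closedness (Lemma~\ref{closed}). The only point requiring care is the bookkeeping that restricts every sum to the common faces $F_{\{i,j\}}$ together with the change of variables between $k$ and $s$; once that is in place, summing the identity $k_j\,\partial L_{i,f}/\partial k_j=k_i\,\partial L_{j,f}/\partial k_i$ over $f\in F_{\{i,j\}}$ directly yields $A_{ij}=A_{ji}$.
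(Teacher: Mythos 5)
Your proposal is correct and follows essentially the same route as the paper: non-negativity from the face-level monotonicity $\partial L_{i,f}/\partial k_j<0$ (with the observation that only faces in $F_{\{i,j\}}$ contribute), and symmetry from the closedness of $\omega_f$, i.e.\ the identity $k_j\,\partial L_{i,f}/\partial k_j=k_i\,\partial L_{j,f}/\partial k_i$ summed over common faces. The reformulation via the Hessian of $\mathcal{E}$ is just a repackaging of that same identity, so there is no substantive difference.
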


\begin{proof}
    By Lemma 2.6 in \cite{HHSZ}, we have $$\frac{\partial L_{i, f}}{\partial k_j}<0, ~~i\ne j\in V_f.$$ 
    When $j\notin V_f$, we know 
    $$
    \frac{\partial L_{i, f}}{\partial k_j}=0,~~i\in V_f.
    $$
    Hence we have  
    $$
    A_{ij}=-\frac{\partial(\sum_{f\in F_{\{i\}}}L_{i,f})}{\partial k_j}k_j=-\sum_{f\in F_{\{i\}}}\frac{\partial L_{i,f}}{\partial k_j}k_j\ge 0.$$
    Moreover,
    $$
    A_{ij}=-\frac{\partial(\sum_{f\in F_{\{i\}}}L_{i,f})}{\partial k_j}k_j
    =-\frac{\partial(\sum_{f\in F_{\{i,j\}}}L_{i,f})}{\partial k_j}k_j
    =-\sum_{f\in F_{\{i,j\}}}\frac{\partial L_{i,f}}{\partial k_j}k_j,
    $$
    We also have a similar result for $A_{ji}$, i.e.
    $$
    A_{ji}=-\sum_{f\in F_{\{i,j\}}}\frac{\partial L_{j,f}}{\partial k_i}k_i.
    $$
   In section \ref{sec2}, we know the 1-form $\omega_f=\sum_{i=1}^{|V_f|} L_{i, f} \mathrm{~d} s_i$ is closed, hence 
    $$
    \frac{\partial L_{i,f}}{\partial s_j}=\frac{\partial L_{j,f}}{\partial s_i},~~\forall i,j\in V_f,
    $$
    i.e.
    $$\frac{\partial L_{i,f}}{\partial k_j}k_j=\frac{\partial L_{j,f}}{\partial k_i}k_i,~~\forall i,j\in V_f.$$
    Hence we have $A_{ij}=A_{ji}$.
    \end{proof}
\begin{proposition}\label{pro 4.5}
    Let $g:V\rightarrow\mathbb{R}$ be a function on $V$, then we have
    $$
    \sum_{i=1}^N g_i \Delta_p g_i\le 0,
    $$
where $\Delta_p$ is defined in \eqref{lap}.
\end{proposition}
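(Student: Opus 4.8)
The plan is to prove the inequality by a discrete summation-by-parts (symmetrization) argument, exploiting precisely the two properties of the coefficients $A_{ij}$ established in Proposition \ref{pro 4.4}, namely nonnegativity ($A_{ij}\ge 0$) and symmetry ($A_{ij}=A_{ji}$). First I would unfold the left-hand side, using the definition \eqref{lap} of $\Delta_p$, into a double sum over ordered adjacent pairs:
$$
S := \sum_{i=1}^N g_i \Delta_p g_i = \sum_{i=1}^N \sum_{j\sim i} g_i\, A_{ij}\, |g_j - g_i|^{p-2}(g_j - g_i).
$$

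Next I would relabel the summation indices by swapping $i \leftrightarrow j$. Because the adjacency relation is symmetric and $A_{ij}=A_{ji}$ while $|g_j-g_i|=|g_i-g_j|$, this produces a second valid expression for the same quantity,
$$
S = \sum_{i=1}^N \sum_{j\sim i} g_j\, A_{ij}\, |g_j - g_i|^{p-2}(g_i - g_j) = -\sum_{i=1}^N \sum_{j\sim i} g_j\, A_{ij}\, |g_j - g_i|^{p-2}(g_j - g_i).
$$
Adding the two representations of $S$ and using $(g_j-g_i)^2 = |g_j-g_i|^2$ so that $|g_j-g_i|^{p-2}\cdot|g_j-g_i|^2 = |g_j-g_i|^{p}$, the two single-variable terms combine into a single manifestly signed sum:
$$
2S = -\sum_{i=1}^N \sum_{j\sim i} A_{ij}\, |g_j - g_i|^{p-2}(g_j - g_i)^2 = -\sum_{i=1}^N \sum_{j\sim i} A_{ij}\, |g_j - g_i|^{p}.
$$

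The final step is simply to read off the sign. Since $A_{ij}\ge 0$ and $|g_j-g_i|^{p}\ge 0$ for every adjacent pair (here $p>1$, so the exponent is admissible), every summand on the right is nonnegative, whence $2S\le 0$ and therefore $\sum_{i=1}^N g_i\Delta_p g_i = S\le 0$, as claimed. There is no substantive obstacle in this argument; its entire content is the symmetrization trick, and the only point demanding care is the index bookkeeping in the swap — in particular tracking the sign flip coming from $(g_i-g_j)=-(g_j-g_i)$ and verifying the recombination of the exponent into $|g_j-g_i|^{p}$. Both the symmetry and the nonnegativity of $A_{ij}$ that drive the argument are furnished by Proposition \ref{pro 4.4}.
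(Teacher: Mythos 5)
Your proof is correct and follows exactly the same symmetrization argument as the paper, which also exchanges $i$ and $j$ using $A_{ij}=A_{ji}$ to arrive at $\sum_i g_i\Delta_p g_i=-\tfrac12\sum_i\sum_{j\sim i}A_{ij}|g_j-g_i|^p\le 0$. Your write-up merely spells out the index bookkeeping that the paper leaves implicit.
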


\begin{proof}
    By Proposition \ref{pro 4.4}, we know $A_{ij}=A_{ji}$.
    We exchange $i$ and $j$, it is easy to know 
    $$
    \begin{aligned}
\sum_{i=1}^N g_i \Delta_p g_i=-\frac{1}{2} \sum_{i=1}^N \sum_{j \sim i} A_{i j}\left|g_j-g_i\right|^p\le 0.
\end{aligned}
$$
\end{proof}
As a consequence, we have the following corollary, i.e.
\begin{corollary}\label{coro 2.7}
    For the total geodesic curvature $L=(L_1,\cdots,L_N)^T$ and $\hat{L}=({\hat{L}}_1,\cdots,{\hat{L}}_N)^T\in \mathcal{L}$, then we have
    $$
    (L-\hat{L})^T\Delta_p (L-\hat{L})\le 0.
    $$
\end{corollary}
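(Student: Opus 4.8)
The plan is to observe that this corollary is nothing more than Proposition \ref{pro 4.5} specialized to a particular choice of function. First I would note that since both $L$ and $\hat{L}$ are vectors indexed by the vertex set $V$ (with $|V| = N$), their difference defines a genuine real-valued function $g : V \to \mathbb{R}$ by setting $g_i = L_i - \hat{L}_i$ for each $i \in V$. This is the only identification needed, and it is evidently admissible as an input to Proposition \ref{pro 4.5}.

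Next I would unfold the matrix--vector notation in the statement. Writing $\Delta_p(L-\hat{L})$ for the vector whose $i$-th component is $\Delta_p g_i$ (with $\Delta_p$ as in \eqref{lap}), the quadratic form in the statement expands componentwise as
$$(L-\hat{L})^T \Delta_p(L-\hat{L}) = \sum_{i=1}^N (L_i - \hat{L}_i)\,\Delta_p g_i = \sum_{i=1}^N g_i \Delta_p g_i.$$
Applying Proposition \ref{pro 4.5} to this $g$ immediately yields $\sum_{i=1}^N g_i \Delta_p g_i \le 0$, which is precisely the desired inequality.

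The substantive work has already been carried out in Proposition \ref{pro 4.5}, where the symmetry $A_{ij} = A_{ji}$ established in Proposition \ref{pro 4.4} is used to rewrite the double sum in the symmetrized form $-\frac{1}{2}\sum_{i} \sum_{j \sim i} A_{ij}\,|g_j - g_i|^p$, which is manifestly nonpositive since each $A_{ij} \ge 0$ (also from Proposition \ref{pro 4.4}) and each $|g_j - g_i|^p \ge 0$. Consequently there is no genuine obstacle in proving the corollary itself; it is pure bookkeeping once the correct substitution $g = L - \hat{L}$ is made. The real content of nonpositivity lives upstream in Propositions \ref{pro 4.4} and \ref{pro 4.5}.
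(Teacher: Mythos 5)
Your proposal is correct and matches the paper's own proof, which likewise applies Proposition \ref{pro 4.5} to the function $g = L - \hat{L}$ and cites Propositions \ref{pro 4.4} and \ref{pro 4.5} for the nonpositivity. No further comment is needed.
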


\begin{proof}
    By Proposition \ref{pro 4.4} and Proposition \ref{pro 4.5}, we have 
    $$
   (L-\hat{L})^T\Delta_p (L-\hat{L})=\sum_{i=1}^N (L_i-{\hat{L}}_i) \Delta_p (L_i-{\hat{L}}_i)\le 0.
    $$
\end{proof}
We can suppose that $\{s(t)~|~t\in [0,+\infty)\}$ is a solution to the combinatorial $p$-th Calabi flow \ref{equi p-calabi_flow}. Now we define a fuction, i.e.
$$
\Phi(t)=\Theta(s(t)),
$$
then we have the following proposition, i.e.
\begin{proposition}\label{pro 4.1}
    Suppose $\{s(t)~|~t\in[0,+\infty)\}$ is a long time solution to the combinatorial
p-th Calabi flow \ref{equi p-calabi_flow}, then $\Phi(t)$ is bounded. Moreover, $\operatorname{lim}_{t\rightarrow +\infty} \Phi(t)$  exists. 
\end{proposition}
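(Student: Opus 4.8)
The plan is to show that $\Phi$ is non-increasing and bounded below, so that the existence of the limit follows from monotone convergence. First I would differentiate $\Phi(t)=\Theta(s(t))$ along the flow. Since $\nabla\Theta=L-\hat{L}$, the chain rule together with the flow equation \eqref{equi p-calabi_flow} gives
$$
\ddt{\Phi}=\sum_{i=1}^{N}(L_i-\hat{L}_i)\ddt{s_i}=\sum_{i=1}^{N}(L_i-\hat{L}_i)(\Delta_p-K_i)(L_i-\hat{L}_i).
$$
Separating the $\Delta_p$ term from the diagonal term, this reads
$$
\ddt{\Phi}=(L-\hat{L})^T\Delta_p(L-\hat{L})-(L-\hat{L})^TK(L-\hat{L}),
$$
where $K=\mathrm{diag}\{K_1,\cdots,K_N\}$.

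Next I would invoke the structural results already established. By Corollary \ref{coro 2.7} the first term satisfies $(L-\hat{L})^T\Delta_p(L-\hat{L})\le 0$, and by Proposition \ref{pro 2.4} the matrix $K$ is positive definite, so $(L-\hat{L})^TK(L-\hat{L})\ge 0$. Combining these gives $\ddt{\Phi}\le 0$, so $\Phi$ is non-increasing; in particular $\Phi(t)\le\Phi(0)$ for all $t\ge 0$, which provides the upper bound.

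For the lower bound I would use the strict convexity of $\Theta$ established above. By Proposition \ref{pro 3.3}, $\hat{s}$ is the unique critical point of the strictly convex function $\Theta$, hence its global minimum; and since $\Theta(\hat{s})=\int_{\hat{s}}^{\hat{s}}\omega=0$, we obtain $\Phi(t)=\Theta(s(t))\ge\Theta(\hat{s})=0$ for all $t$. Thus $0\le\Phi(t)\le\Phi(0)$, so $\Phi$ is bounded. Since a non-increasing function that is bounded below necessarily converges, $\lim_{t\to+\infty}\Phi(t)$ exists. The computation of $\ddt{\Phi}$ is routine; the only points deserving attention are the correct bookkeeping of the two signs (Corollary \ref{coro 2.7} versus Proposition \ref{pro 2.4}) and the observation that the minimum value of $\Theta$ is exactly $0$, which yields the clean lower bound $\Phi\ge 0$.
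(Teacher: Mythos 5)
Your proof is correct and follows essentially the same route as the paper: the same chain-rule computation of $\Phi'$, the same splitting into the $\Delta_p$ term (handled by Corollary \ref{coro 2.7}) and the $K$ term (handled by Proposition \ref{pro 2.4}), and the same lower bound $\Phi\ge\Theta(\hat{s})=0$ from strict convexity via Proposition \ref{pro 3.3}. Nothing is missing.
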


\begin{proof}
    By Proposition \ref{pro 3.3}, $\hat{s}$ is the unique critical point of the potential function $\Theta$. Moreover, $\Theta$ is strictly convex. Hence we have
    $$
    \Theta(s)\ge \Theta(\hat{s})=0, \forall s\in \mathbb{R}^N.
$$
For the function $\Phi(t)$, we have 
$$
\begin{aligned}
\Phi^{\prime}(t)&=\nabla\Theta(s(t))^T \cdot \frac{ds}{dt}=(L(s(t))-\hat{L})^T \cdot \frac{ds}{dt}\\
&=\sum_{i=1}^N (L_i(s(t))-{\hat{L}}_i)(\Delta_p-K_i(s(t))) (L_i(s(t))-{\hat{L}}_i)\\
&=\sum_{i=1}^N (L_i(s(t))-{\hat{L}}_i)\Delta_p (L_i(s(t))-{\hat{L}}_i)-\sum_{i=1}^N (L_i(s(t))-{\hat{L}}_i) K_i(s(t)) (L_i(s(t))-{\hat{L}}_i)\\
&=(L(s(t))-\hat{L})^T\Delta_p (L(s(t))-\hat{L})-(L(s(t))-\hat{L})^TK(s(t))(L(s(t))-\hat{L}).
\end{aligned}
$$
By Proposition \ref{pro 2.4} and Corollary \ref{coro 2.7}, we have 
$$
\Phi^{\prime}(t)=(L(s(t))-\hat{L})^T\Delta_p (L(s(t))-\hat{L})-(L(s(t))-\hat{L})^TK(s(t))(L(s(t))-\hat{L})\le 0.
$$
Hence $\Phi(t)$ is decreasing on [0,+$\infty$) and we know that $\Phi(t)=\Theta(s(t))\ge 0$, which implies that $\Phi(t)$ converges, i.e. $\operatorname{lim}_{t\rightarrow +\infty} \Phi(t)$ exists. Moreover, we have 
$$
0\le \Phi(t)\le \Phi(0)=\Theta(s(0)),
$$
i.e. $\Phi(t)$ is bounded.
\end{proof}

\begin{proposition}\label{pro 6.2}
Suppose $\{s(t)|t\in[0,+\infty)\}$ is a long time solution to the combinatorial
p-th Calabi flow \ref{equi p-calabi_flow} and is compactly supported in $\mathbb{R}^N$, then $\Phi(t)$ converges to 0, i.e.
$$
\operatorname{lim}_{t\rightarrow +\infty} \Phi(t)=0.
$$
\end{proposition}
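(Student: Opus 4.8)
The plan is to combine the differential identity for $\Phi'(t)$ established in Proposition \ref{pro 4.1} with the uniform coercivity supplied by the compact-support hypothesis, and then to extract a subsequence along which the orbit $s(t)$ returns to the unique critical point $\hat s$ of $\Theta$.

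First I would recall from Proposition \ref{pro 4.1} that along the flow
$$\Phi'(t)=(L-\hat L)^T\Delta_p(L-\hat L)-(L-\hat L)^TK(L-\hat L),$$
that the first term is nonpositive by Corollary \ref{coro 2.7}, and that $\lim_{t\to+\infty}\Phi(t)$ already exists and is nonnegative; denote it $\Phi_\infty\ge 0$. Since the orbit $\{s(t)\ |\ t\ge 0\}$ is compactly supported, it lies in a compact set $\bar{U}\subset\mathbb{R}^N$. Each $K_i$ is continuous and strictly positive by Proposition \ref{pro 2.4}, so it attains a positive minimum on $\bar{U}$; hence there is a constant $\lambda_0>0$ with $K\ge\lambda_0 I$ on $\bar{U}$. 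Discarding the nonpositive $\Delta_p$ term and using $\nabla\Theta=L-\hat L$ then yields the coercive estimate
$$\Phi'(t)\le -(L-\hat L)^TK(L-\hat L)\le -\lambda_0\,|\nabla\Theta(s(t))|^2\le 0.$$

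Next I would integrate this inequality over $[0,+\infty)$. Because $\Phi$ is bounded below by $0$, this gives $\lambda_0\int_0^{+\infty}|\nabla\Theta(s(t))|^2\,dt\le\Phi(0)-\Phi_\infty<+\infty$, so the nonnegative integrand is integrable and therefore $\liminf_{t\to+\infty}|\nabla\Theta(s(t))|=0$. I would then choose a sequence $t_n\to+\infty$ with $\nabla\Theta(s(t_n))\to 0$; by compactness of $\bar{U}$ we may pass to a subsequence (not relabeled) with $s(t_n)\to s^*\in\bar{U}$, and continuity of $\nabla\Theta$ forces $\nabla\Theta(s^*)=0$. By Proposition \ref{pro 3.3} the only critical point of $\Theta$ is $\hat s$, so $s^*=\hat s$ and $\Phi(t_n)=\Theta(s(t_n))\to\Theta(\hat s)=0$. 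Since $\Phi$ is monotone and convergent, its full limit coincides with this subsequential limit, so $\Phi_\infty=0$.

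The main obstacle is precisely the passage from mere monotonicity of $\Phi$ to identifying its limit as $0$: the inequality $\Phi'(t)\le 0$ by itself forces convergence but not convergence to the minimum. The decisive ingredient is the uniform lower bound $K\ge\lambda_0 I$, which the compact-support hypothesis makes available and which upgrades $\Phi'\le 0$ to the coercive estimate $\Phi'\le -\lambda_0|\nabla\Theta|^2$; the remainder is a standard LaSalle-type subsequence extraction combined with the rigidity (uniqueness of the critical point) from Proposition \ref{pro 3.3}.
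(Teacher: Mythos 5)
Your proof is correct, and its key analytic step differs from the one in the paper. The paper applies the mean value theorem on the unit intervals $[n,n+1]$ to produce times $\xi_n$ with $\Phi'(\xi_n)\to 0$, extracts a convergent subsequence $s(\xi_n)\to s^*$ by compactness, and then passes to the limit in the \emph{full} expression for $\Phi'$: since the limiting $\Delta_p$-term is nonpositive (Corollary \ref{coro 2.7}) and the limiting $K$-term is nonnegative, both must vanish, and positive definiteness of $K^*$ at the single point $s^*$ forces $L^*=\hat L$, hence $s^*=\hat s$ by Proposition \ref{pro 3.3}. You instead discard the $\Delta_p$-term at the outset, use compactness of the orbit closure to obtain a uniform bound $K\ge\lambda_0 I$ (legitimate, since each $K_i$ is continuous and strictly positive by Proposition \ref{pro 2.4}), and integrate the resulting coercive inequality $\Phi'\le-\lambda_0\vert\nabla\Theta\vert^2$ to get integrability of $\vert\nabla\Theta(s(t))\vert^2$ and thereby a sequence $t_n\to+\infty$ with $\nabla\Theta(s(t_n))\to 0$; the endgame (convergent subsequence, uniqueness of the critical point, continuity of $\Theta$, and the already-established existence of $\lim_{t\to+\infty}\Phi(t)$) is the same. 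Your route buys a quantitative byproduct, namely $\int_0^{+\infty}\vert L-\hat L\vert^2\,dt<+\infty$, and avoids having to control the limit of the nonlinear $\Delta_p$-term along the subsequence; the paper's route avoids the need for a uniform lower bound on $K$, using only its pointwise positivity at the limit point. Both arguments lean on the same two hypotheses that do the real work: compactness of the orbit and uniqueness of the critical point.
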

\begin{proof}
Using the mean value theorem for the function $\Phi(t)$, we have 
\begin{align}
\Phi(n+1)-\Phi(n)=\Phi^{\prime}(\xi_n), ~~\forall n\in \mathbb{N},\label{mean value}
\end{align}
for some $\xi_n\in[n,n+1]$.
By Proposition \ref{pro 4.1}, $\operatorname{lim}_{t\rightarrow +\infty} \Phi(t)$  exists, hence we have 
$$
0=\operatorname{lim}_{n\rightarrow +\infty} \Phi(n+1)-\operatorname{lim}_{n\rightarrow +\infty}\Phi(n)=\operatorname{lim}_{n\rightarrow +\infty} \Phi^{\prime}(\xi_n).
$$
By the assumption, we know that $\{s(t)\}_{t\ge 0}$ is compactly supported in $\mathbb{R}^N$, hence $\{s(\xi_n)| \xi_n\in[n,n+1], n\in \mathbb{N}\}$ has a convergent subsequence, we still use $\{s(\xi_n)\}_{n\ge 0}$ to denote this convergent subsequence for simplicity. Hence there exists a point $s^*\in \mathbb{R}^N$, such that
$$
\operatorname{lim}_{n\rightarrow +\infty} s(\xi_n)=s^*.
$$
Then we have
$$
\operatorname{lim}_{n\rightarrow +\infty} L_i(s(\xi_n))=L_i(s^*)\overset{\Delta}{=}L_i^*,~~\operatorname{lim}_{n\rightarrow +\infty} A_{ij}(s(\xi_n))= A_{ij}(s^*)\overset{\Delta}{=} A_{ij}^*.
$$
Moreover,
$$
\operatorname{lim}_{n\rightarrow +\infty}K_i(s(\xi_n))=K_i(s^*)\overset{\Delta}{=} K_i^*.
$$
Then we have
$$
\begin{aligned}
\operatorname{lim}_{n\rightarrow +\infty}\Phi^{\prime}(\xi_n)&=
\operatorname{lim}_{n\rightarrow +\infty}(L(s(\xi_n))-\hat{L})^T\Delta_p (L(s(\xi_n))-\hat{L})\\
&-\operatorname{lim}_{n\rightarrow +\infty}(L(s(\xi_n))-\hat{L})^T K(s(\xi_n))(L(s(\xi_n))-\hat{L})\\
&=(L^*-\hat{L})^T\Delta_p (L^*-\hat{L})-(L^*-\hat{L})^TK^*(L^*-\hat{L})\\
&=0.
\end{aligned}
$$
By corollary \ref{coro 2.7}, we have $(L^*-\hat{L})^T\Delta_p (L^*-\hat{L})\le 0$, i.e.
$$
(L^*-\hat{L})^TK^*(L^*-\hat{L})=(L^*-\hat{L})^T\Delta_p (L^*-\hat{L})\le 0.
$$
By Proposition \ref{pro 2.4}, $K^*=diag\{K_1^*,\cdots,K_N^*\}$ is a positive definite matrix, hence we have
$$
0\le (L^*-\hat{L})^TK^*(L^*-\hat{L})\le 0,
$$
i.e. $(L^*-\hat{L})^TK^*(L^*-\hat{L})=0$, which implies that $L^*-\hat{L}=0$.

Hence we have
$$
\nabla\Theta(s^*)=L(s^*)-\hat{L}=L^*-\hat{L}=0,
$$
i.e.
$s^*$ is a critical point of $\Theta$.

By Proposition \ref{pro 3.3}, $\hat{s}$ is the unique critical point of the potential function $\Theta$, which implies that $s^*=\hat{s}$, i.e.
$$
\operatorname{lim}_{n\rightarrow +\infty}s(\xi_n)=\hat{s}.
$$
Then we have
$$
\operatorname{lim}_{n\rightarrow +\infty}\Phi(\xi_n)=\operatorname{lim}_{n\rightarrow +\infty}\Theta(s(\xi_n))=\Theta(\hat{s})=0,$$ 
which implies that
$$\operatorname{lim}_{t\rightarrow +\infty} \Phi(t)=0.$$
\end{proof}
\begin{proposition}\label{them 6.3}
    Suppose $\{s(t)|t\in[0,+\infty)\}$ is a long time solution to the combinatorial
p-th Calabi flow \ref{equi p-calabi_flow} and is compactly supported in $\mathbb{R}^N$, then $s(t)$ converges $\hat{s}$, i.e.
$$
\operatorname{lim}_{t\rightarrow +\infty} s(t)=\hat{s}.
$$
\end{proposition}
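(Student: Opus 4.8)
The plan is to upgrade the convergence of the energy $\Phi(t)=\Theta(s(t))$ to $0$, already established in Proposition \ref{pro 6.2}, into convergence of the trajectory $s(t)$ itself to the minimizer $\hat{s}$. The whole argument rests on two facts proved earlier: the strict convexity of $\Theta$ (its Hessian $M$ is positive definite by Proposition \ref{pro 3.1}), together with the uniqueness of the critical point $\hat{s}$ from Proposition \ref{pro 3.3}. First I would record what these give me. Since $\Theta$ is strictly convex and $\hat{s}$ is its only critical point, $\hat{s}$ is the unique global minimum, so $\Theta(s)>\Theta(\hat{s})=0$ for every $s\neq\hat{s}$; and by Proposition \ref{pro 6.2}, $\lim_{t\to+\infty}\Theta(s(t))=0=\Theta(\hat{s})$.

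Then I would argue by contradiction. Suppose $s(t)$ does not converge to $\hat{s}$. Then there exist $\varepsilon>0$ and a sequence $t_n\to+\infty$ with $\|s(t_n)-\hat{s}\|\ge\varepsilon$ for all $n$. Because the solution is compactly supported in $\mathbb{R}^N$, the sequence $\{s(t_n)\}$ lies in a compact set, so after passing to a subsequence (not relabeled) it converges to some $s^{**}\in\mathbb{R}^N$ with $\|s^{**}-\hat{s}\|\ge\varepsilon$, hence $s^{**}\neq\hat{s}$. By continuity of $\Theta$ and Proposition \ref{pro 6.2}, $\Theta(s^{**})=\lim_{n\to+\infty}\Theta(s(t_n))=0=\Theta(\hat{s})$, which contradicts the strict inequality $\Theta(s^{**})>\Theta(\hat{s})$. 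Therefore $s(t)\to\hat{s}$, as claimed.

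The only delicate point, and the place where both hypotheses are genuinely used, is the passage from ``energy tends to zero'' to ``trajectory converges'': compact support furnishes the convergent subsequence, while strict convexity forbids the limit from being any point other than $\hat{s}$ at which $\Theta$ vanishes. There is no deeper obstacle once Propositions \ref{pro 3.1}, \ref{pro 3.3}, and \ref{pro 6.2} are in hand; the subtlety is purely that $\Theta(s(t))\to 0$ alone would not pin down $s(t)$ without controlling the sublevel behaviour near the minimizer.

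As an alternative I would note that the conclusion can be made quantitative. Enclosing the trajectory and $\hat{s}$ in a compact convex set $D$ (e.g. a large closed ball), positive definiteness of $M=\mathrm{Hess}\,\Theta$ together with continuity gives a uniform lower bound $\mathrm{Hess}\,\Theta\ge\lambda_0 I$ on $D$ for some $\lambda_0>0$. A Taylor expansion at $\hat{s}$, using $\nabla\Theta(\hat{s})=0$, then yields $\Theta(s(t))\ge\tfrac{\lambda_0}{2}\|s(t)-\hat{s}\|^2$, so that $\Theta(s(t))\to 0$ forces $s(t)\to\hat{s}$ directly and even provides a convergence estimate. I would present the contradiction argument as the main line and mention this quantitative version as a remark.
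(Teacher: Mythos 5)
Your proposal is correct and follows essentially the same contradiction argument as the paper: both use compactness of the trajectory together with strict convexity and the uniqueness of the critical point $\hat{s}$ to rule out a sequence $s(t_n)$ staying a fixed distance from $\hat{s}$ while $\Theta(s(t_n))\to 0$. The only cosmetic difference is that you extract a convergent subsequence and evaluate $\Theta$ at its limit, whereas the paper invokes a uniform positive lower bound for $\Theta$ on the compact set minus a ball around $\hat{s}$; these are interchangeable.
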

\begin{proof}
     Suppose it is not true, there exists $\delta>0$ and a sequence $\{t_n\}_{n\ge 1}$ which satisfies $\operatorname{lim}_{n\rightarrow +\infty} t_n=+\infty$, such that
     $$
    |s(t_n)-\hat{s}|>\delta,~~\forall n\ge 1,
     $$
i.e. $\{s(t_n)\}_{n\ge 1}\subset\mathbb{R}^N\setminus B(\hat{s},\delta)$. 

By the assumption, we know that $\overline{\{s(t)\}}_{t\ge 0}$ is concluded in a compact subset in $\mathbb{R}^N$, we denoted the compact subset by $D$. Hence we have 
$$
\{s(t_n)\}_{n\ge 1}\subset D\cap (\mathbb{R}^N\setminus B(\hat{s},\delta)).
$$
$\Theta$ is strictly convex and by Proposition \ref{pro 3.3}, it is easy to see that $\Theta$ has a positive lower bound on $D\cap (\mathbb{R}^N\setminus B(\hat{s},\delta))$, i.e. there exists a positive constant $\lambda$, such that 
$$
\Theta(s)\ge \lambda>0,~~\forall s\in D\cap (\mathbb{R}^N\setminus B(\hat{s},\delta)).
$$
Hence we have
$$
\operatorname{lim}_{n\rightarrow +\infty} \Phi(t_n)=\operatorname{lim}_{n\rightarrow +\infty}\Theta(s(t_n))\ge \lambda>0,
$$
which contradicts to the Proposition \ref{pro 6.2}. 
\end{proof}
Now we can proof the main Theorem \ref{flowthm}.

\begin{proof}[\textbf{Proof of Theorem \ref{flowthm}}] $2. \Rightarrow 1.$ By the assumption, there exists a generalized circle packing metric $\hat{s}$, such that $L(\hat{s})=\hat{L}$. By Theorem \ref{main1}, we know that $\hat{L}=L(\hat{s})=\nabla\mathcal{E}(\hat{s})\in\mathcal{L}$. 

$1. \Rightarrow 2.$ By Theorem \ref{Prior estimate}, we know that the solution of the combinatorial $p$-th Calabi flow (\ref{equi p-calabi_flow}) exists for all time $t\in [0,+\infty)$. Equivalently, the solution of the combinatorial $p$-th Calabi flow (\ref{p-calabi_flow}) exists for all time. We denote the solution of $p$-th Calabi flow \eqref{equi p-calabi_flow} by $s(t), t\in [0,+\infty).$ 

Since $\hat{L}\in\mathcal{L}$, by Theorem \ref{main1}, there exists a point $\hat{s}$, such that $\nabla\mathcal{E}(\hat{s})=L(\hat{s})=\hat{L}$. Hence we can construct fuctions $\Theta$ and $\Phi$, by Proposition \ref{pro 4.1}, we have 
$$
0\le \Phi(t)\le \Theta(s(0)),~~\forall t\in [0,+\infty),
$$
i.e.
$$
\{s(t)\}_{t\ge 0}\subset\Theta^{-1}[0,\Theta(s(0))].
$$
$\Theta$ is strictly convex and has a critical point $\hat{s}$, we know that $\Theta$ is a proper map. Hence $\Theta^{-1}[0,\Theta(s(0))]$ is compact, i.e.
$\{s(t)| t\in [0,+\infty)\}$ is compactly supported in $\mathbb{R}^N$. By Proposition \ref{them 6.3}, $s(t)$ converges to $\hat{s}$. Equivalently, the solution of the combinatorial $p$-th Calabi flow (\ref{p-calabi_flow}) converges to a generalized circle packing metric with the total geodesic curvature $\hat{L}$.
\end{proof}

\section{Acknowledgments}
Guangming Hu is supported by NSF of China (No. 12101275). Ziping Lei is supported by NSF of China (No. 12122119). Yi Qi is supported by NSF of China (No. 12271017). Puchun Zhou is supported by Shanghai Science and Technology Program [Project No. 22JC1400100].

\Addresses

\end{document}